\newcommand{\B}{\mathcal{B}}
\newcommand{\K}{\mathcal{K}} 
\newcommand{\CC}{{\mathbb{C}}}
\newcommand{\MM}{{\mathbb{M}}}
\newcommand{\RR}{{\mathbb{R}}} 
\newcommand{\TT}{{\mathbb{T}}}
\newcommand{\ZZ}{{\mathbb{Z}}}
\newcommand{\As}{{\mathscr{A}}}\newcommand{\Bs}{{\mathscr{B}}}\newcommand{\Cs}{{\mathscr{C}}}
\newcommand{\Es}{{\mathscr{E}}}\newcommand{\Fs}{{\mathscr{F}}}
\newcommand{\Ks}{{\mathscr{K}}} 
\newcommand{\Ls}{{\mathscr{L}}}	
\newcommand{\Ms}{{\mathscr{M}}}\newcommand{\Ns}{{\mathscr{N}}}
\newcommand{\Qs}{{\mathscr{Q}}}
\newcommand{\Ts}{\mathscr{T}} 			
\DeclareFontFamily{U}{rsfs}{\skewchar\font127 }
\DeclareFontShape{U}{rsfs}{m}{n}{%
   <5> <6> rsfs5
   <7> rsfs7
   <8> <9> <10> <10.95> <12> <14.4> <17.28> <20.74> <24.88> rsfs10
}{}
\DeclareSymbolFont{rsfs}{U}{rsfs}{m}{n} 
\DeclareSymbolFontAlphabet{\scr}{rsfs}
\newcommand{\Af}{\scr{A}}
\newcommand{\Mf}{\scr{M}}
\DeclareMathOperator{\CCl}{\CC l}		
\DeclareMathOperator{\ke}{Ker}
\DeclareMathOperator{\im}{Im}
\DeclareMathOperator{\id}{Id} 
\DeclareMathOperator{\Ob}{Ob}
\DeclareMathOperator{\Hom}{Hom}
\DeclareMathOperator{\Span}{span}
\renewcommand{\emph}{\textbf} 										
\newcommand{\cj}[1]{\overline{#1}}									
\newcommand{\mip}[2]{(#1\mid #2)}									
\newcommand{\ip}[2]{\langle #1\mid #2\rangle}					
\newcommand{\imp}{\Rightarrow}										
\newcommand{\hlink}[2]{\href{#1}{\texttt{#2}}} 
\newcommand{\xqedhere}[2]{%
  \rlap{\hbox to#1{\hfil\llap{\ensuremath{#2}}}}}
\newcommand{\xqed}[1]{%
  \leavevmode\unskip\penalty9999 \hbox{}\nobreak\hfill
  \quad\hbox{\ensuremath{#1}}}
\theoremstyle{plain}
\newtheorem{theorem}{Theorem}[section]							
\newtheorem{proposition}[theorem]{Proposition}
\newtheorem{definition}[theorem]{Definition}
\theoremstyle{definition} 
\newtheorem{remark}[theorem]{Remark}
\newtheorem{example}[theorem]{Example}  
\numberwithin{equation}{section}  		
\title{\textbf{Kre\u\i n C*-modules}
}
\author{\normalsize Paolo Bertozzini$^a$, 
Kasemsun Rutamorn$^b$ 
\\
\normalsize $^a$\textit{Department of Mathematics and Statistics, Faculty of Science and Technology,}
\\
\normalsize \textit{Thammasat University, Bangkok 12121, Thailand} 
\\ 
\normalsize e-mail: \texttt{paolo.th@gmail.com}
\\
\normalsize $^b$\textit{Department of Mathematics, Faculty of Education,  Dhonburi Rajabhat University,}
\\
\normalsize \textit{Bangkok 10600, Thailand}
\\
\normalsize e-mail: \texttt{kasemamorn270@hotmail.com}
}
\date{\normalsize{10 December 2013}\footnote{This is a reformatted version for arXiv of a paper published in 
Chamchuri Journal of Mathematics (2013) 5:23-44. 
}
}
\begin{document}

\maketitle

\begin{abstract} \noindent 
We introduce a notion of Kre\u\i n C*-module over a C*-algebra and more generally over a Kre\u\i n \hbox{C*-al}\-gebra. 
Some properties of Kre\u\i n C*-modules and their categories are investigated. 

\smallskip

\noindent
\emph{Keywords:} Kre\u\i n space, Kre\u\i n C*-module, tensor product.

\smallskip

\noindent
\emph{MSC-2010:}
					47B50, 
					46C20, 			
					46L08, 
					46L87,			
					46M15, 			
					53C50. 			
\end{abstract}

\tableofcontents



\section{Introduction}

Vector spaces with an indefinite inner product started to appear in physics with the work on special relativistic space-time by H.Minkowski~\cite{M}  and were later used for the first time in quantum field theory by P.Dirac~\cite{D} and W.Pauli~\cite{P}, but their first mathematical discussion was provided by L.Pontrjagin~\cite{Po} and since then they have been an object of study mainly of the Russian school.

Kre\u\i n spaces, i.e.~complete vector spaces equipped with an indefinite inner product, were formally defined by Ju.Ginzburg~\cite{Gi} and in their  present form by E.Scheibe~\cite{Sc}. 
Their properties have been investigated by several mathematicians such as I.Iohvidov, H.Langer, R.Phillips, M.Na\u\i mark, M.Kre\u\i n and his school 
and have been extensively used in quantum field theory via the Gupta-Bleuler~\cite{B,G} formalism in quantum electrodynamics.\footnote{See as main references: J.Bognar\cite{Bo}, T.Azizov, I.Iohvidov~\cite{AI}, M.Dritschel, J.Rovnyak~\cite{DR}, E.Kissin, V.Shulman~\cite{KS}.} 
They have been reconsidered in quantum field theory by K.Kawamura~\cite{K,K2} who also proposed axioms for Kre\u\i n C*-algebras (involutive algebras of bounded linear operators on a Kre\u\i n space). Kre\u\i n spaces also appeared prominently in the definition of semi-Riemannian spectral triples in non-commutative geometry, by A.Strohmaier~\cite{Str}, M.Paschke, A.Sitarz~\cite{PS} and more recently by K.van den Dungen, M.Paschke, A.Rennie~\cite{DPR}.

Hilbert C*-modules (complete modules over a C*-algebra with a C*-algebra-valued positive inner product) are a generalization of Hilbert spaces where the field of complex numbers is substituted by a general C*-algebra. They were first introduced in 1953 by I.Kaplanski~\cite{Kap} in the case of commutative unital \hbox{C*-al}\-gebras. Between 1972 and 1974 W.Paschke~\cite{Pa1,Pa2} and M.Rieffel~\cite{R1,R2,R3} extended the theory to the case of modules over arbitrary C*-algebras and after that the subject grew and spread rapidly. 

The purpose of this paper is to introduce an extremely elementary notion of Kre\u\i n C*-module over a Kre\u\i n C*-algebra, generalizing to the module case the usual decomposability condition of a Kre\u\i n space into its ``positive'' and ``negative'' subspaces, and closely following the definition of 
Kre\u\i n C*-module over a \hbox{C*-al}\-gebra elaborated in S.Kaewumpai~\cite{Ka}. 

In practice, such ``decomposable'' kind of Kre\u\i n modules admit a non-canonical splitting as direct sums of Hilbert C*-modules, with opposite signature that, when we allow the algebra to be a Kre\u\i n C*-algebra, can be chosen to be ``compatible'' with one of the fundamental symmetry automorphisms of the algebra. 

We first recall in section~\ref{sec: k-alg} (a variant of) the definition of Kre\u\i n C*-algebra by K.Kawamura~\cite{K,K2} and, in 
section~\ref{sec: k-mod}, for the benefit of the reader, we reproduce in some detail the key definitions and proofs of the main results on Kre\u\i n 
\hbox{C*-modules} over C*-algebras that were developed in S.Kaewumpai's thesis~\cite{Ka}. 

In section~\ref{sec: k-mod-2}, we further extend the previous definition to cover the case of Kre\u\i n C*-modules over Kre\u\i n C*-algebras. In the subsequent section~\ref{sec: cat} we expand the notion of tensor product of Kre\u\i n C*-modules over C*-algebras, formulated in 
R.Tanadkithirun~\cite{T} in order to cover our more general situation and we discuss some of the properties of the categories of modules and bimodules so obtained. 

Several examples illustrating the scope of the definitions are presented. Of particular interest are the possible applications to the spectral geometry of semi-Riemannian manifolds and their non-commutative counterparts. 

As it can be clearly appreciated by these geometric examples of modules of vector fields over the Clifford algebra of a semi-Riemannian manifold, the notion of Kre\u\i n C*-module that is contained here is very specific and corresponds to the special case of tangent bundles admitting a global decomposition as Whitney orthogonal sums of positive and negative definite Hermitian vector sub-bundles i.e.~semi-Riemannian manifolds that are time-orientable and space-orientable.\footnote{For details on semi-Riemannian geometry we refer to B.O'Neill~\cite{O}.} 
More general notions of Kre\u\i n C*-modules are necessary to deal with cases where such global ``splitting'' is not available, but for now we do not enter this interesting discussion that will very likely also require modifications in the definition of Kre\u\i n C*-algebra.

\section{Kre\u\i n C*-Algebras}\label{sec: k-alg}

The following is a variation of the definition of Kre\u\i n C*-algebra introduced by K.Kawamura~\cite{K}. 

\begin{definition}
A \emph{Kre\u\i n C*-algebra} is an involutive complete complex topological algebra  (i.e.~a complete complex topological vector space with a bilinear continuous product and a continuous involution) $\As$ that admits at least one  \emph{fundamental symmetry} i.e.~an involutive automorphism 
$\alpha:\As\to \As$ with $\alpha\circ \alpha=\iota_\As$ and one Banach algebra norm $\|\cdot\|_\alpha$ (inducing the given topology) such that 
$\|\alpha(a^*)a\|_\alpha=\|a\|_\alpha^2$, for all $a\in \As$.
\end{definition}

\begin{proposition}[K.~Kawamura, Example~2.4, Section~2.3]
The set $\B(K)$ of linear continuous operators on a Kre\u\i n space $K$ is a Kre\u\i n C*-algebra. Every fundamental symmetry $J$ of a Kre\u\i n space $K$ is associated to a fundamental symmetry $a\mapsto JaJ$ , $a\in \B(K)$ of the Kre\u\i n C*-algebra $\B(K)$. 
\end{proposition}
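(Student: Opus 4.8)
The plan is to trivialise the problem by fixing a fundamental symmetry of $K$ and recognising that, through it, $\B(K)$ is identified with the C*-algebra of bounded operators on an associated Hilbert space; the C*-identity that this algebra already enjoys is then exactly the Kre\u\i n C*-axiom demanded by the definition.

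First I would fix a fundamental symmetry $J$ of the Kre\u\i n space $K$ and form the positive definite inner product $\ip{Jx}{y}$, $x,y\in K$; by definition of Kre\u\i n space, $K$ with this inner product is a Hilbert space, which I denote by $K_J$. Since any two fundamental symmetries of $K$ induce equivalent Hilbert norms, the underlying complete complex topological vector space of $\B(K)$ is canonically that of $\B(K_J)$, equipped with the operator-norm topology; its product is composition (jointly continuous and submultiplicative) and its involution is the Kre\u\i n adjoint $a\mapsto a^{*}$ determined by $\ip{a^{*}x}{y}=\ip{x}{ay}$ for all $x,y\in K$. The first technical point is to prove that this adjoint is everywhere defined, bounded, and satisfies the identity $a^{*}=Ja^{\dagger}J$, where $a^{\dagger}$ denotes the Hilbert-space adjoint in $K_J$; this uses only that $J$ is $K_J$-selfadjoint and $K_J$-unitary, i.e.\ $J^{\dagger}=J$ and $J^{2}=\iota_K$. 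In particular the involution is a $\|\cdot\|_{\B(K_J)}$-isometry, the remaining involution identities are routine, and hence $\B(K)$ is an involutive complete complex topological algebra.

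Next I would produce the fundamental symmetry. Put $\alpha:=\Ad_J\colon a\mapsto JaJ$ and take $\|\cdot\|_\alpha$ to be the operator norm $\|\cdot\|_{\B(K_J)}$. From $J^{2}=\iota_K$ one reads off $\alpha\circ\alpha=\iota_{\B(K)}$ and $\alpha(ab)=\alpha(a)\alpha(b)$; from $a^{*}=Ja^{\dagger}J$ together with $J^{\dagger}=J$ one computes $\alpha(a^{*})=a^{\dagger}=\alpha(a)^{*}$, so that $\alpha$ is an involutive $*$-automorphism. The norm $\|\cdot\|_\alpha$ is a Banach algebra norm inducing the given topology because $\B(K_J)$ is a C*-algebra, and the required axiom becomes $\|\alpha(a^{*})a\|_\alpha=\|a^{\dagger}a\|_\alpha=\|a\|_\alpha^{2}$, which is precisely the C*-identity in $\B(K_J)$. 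This establishes the first sentence of the proposition, and since $\alpha=\Ad_J$ was constructed from an arbitrary fundamental symmetry $J$ of $K$, the second sentence is established at the same time.

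I do not anticipate a real obstacle, as the statement is in essence a dictionary entry. The two places where care is needed are: (i) making sense of the clause ``$\|\cdot\|_\alpha$ induces the given topology'', which requires recalling that all admissible Hilbert norms on a Kre\u\i n space are equivalent, so that $\B(K)$ carries an intrinsic norm topology; and (ii) keeping the Kre\u\i n adjoint and the Hilbert adjoint apart and verifying the relation $a^{*}=Ja^{\dagger}J$ accurately, since it is exactly this identity that makes $\alpha(a^{*})a$ collapse to $a^{\dagger}a$ and lets the familiar C*-identity of $\B(K_J)$ finish the proof.
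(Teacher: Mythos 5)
Your proposal is correct. Note that the paper itself offers no proof of this proposition --- it is stated as a quotation of Kawamura (Example~2.4, Section~2.3) and left unproved --- so there is no in-paper argument to compare against; what you give is exactly the standard argument one would expect: pass to the Hilbert space $K_J$, verify $a^{*}=Ja^{\dagger}J$ from $J^{\dagger}=J$ and $J^{2}=\iota_K$, and observe that $\alpha=\Ad_J$ turns the C*-identity of $\B(K_J)$ into the Kre\u\i n C*-axiom $\|\alpha(a^{*})a\|_\alpha=\|a\|_\alpha^{2}$. The two points of care you flag (equivalence of the Hilbert norms coming from different fundamental symmetries, and keeping $a^{*}$ and $a^{\dagger}$ distinct) are indeed the only places where anything could go wrong, and your handling of both is sound.
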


\begin{remark}
Note that although, contrary to K.Kawamura, we assume the existence of a given topology, we do not fix a priori any Banach norm on the Kre\u\i n 
\hbox{C*-al}\-gebra, so that several topologically equivalent Banach norms can exist. Specifically, for every fundamental symmetry $\alpha$ there is a unique norm $\|\cdot\|_\alpha$ making $\As$ a C*-algebra, denoted by $\As^\alpha$, that coincides with $\As$ as a complex algebra and whose involution is given by $x^{\dagger_\alpha}:=\alpha(x^*)$, for all $x\in \As$. 

For example, different fundamental symmetries of a Kre\u\i n space $K$, induce operator norms on the Kre\u\i n C*-algebra $\Bs(K)$ of bounded linear operators on $K$ that do not coincide, although they are topologically equivalent. 
\xqed{\lrcorner} 
\end{remark}

In the subsequent sections, we will often use the notation $\As_+$ for the \emph{even part} of the Kre\u\i n \hbox{C*-algebra} $\As$ under a fundamental symmetry $\alpha$, i.e.~the \hbox{C*-al}\-gebra of elements such that $\alpha(x)=x$; and similarly the notation $\As_-$ for the \emph{odd part} of the Kre\u\i n C*-algebra $\As$ under a fundamental symmetry $\alpha$ i.e.~the Hilbert C*-module, over $\As_+$, of elements such that 
$\alpha(x)=-x$.  

Later on we will see natural situations motivated from semi-Riemannian geometry that seem to require further generalization of the definition of 
Kre\u\i n \hbox{C*-al}\-gebra, but for this work we will mostly limit our consideration to the definition above.

\section{Kre\u\i n C*-modules over C*-algebras}\label{sec: k-mod}

In this section, we recall some basic material on unital Kre\u\i n C*-modules over unital C*-algebras that was developed in S.Kaewumpai's Master thesis~\cite{Ka}. The material naturally covers, as a special case, the situation of Kre\u\i n spaces (that are Kre\u\i n C*-modules over the C*-algebra $\CC$) and will be further generalized in the subsequent section where we will consider modules over Kre\u\i n C*-algebras. 

\medskip 

Recall that, given a unital right module $\Es_\As$ over a unital C*-algebra $\As$, an $\As$-valued Hermitian \emph{inner product} is a map 
$\ip{\cdot}{\cdot}:\Es\times\Es\to\As$ such that, for all $x,y,z\in \Es$, $a\in \As$: 
\begin{gather*}
\ip{z}{x+y}=\ip{z}{x}+\ip{z}{y}, 
\quad 
\ip{z}{xa}=\ip{z}{x}a, 
\quad 
\ip{x}{y}^*=\ip{y}{x}. 
\end{gather*}
In the case of unital left modules ${}_\As\Es$, the second property above is substituted with $\ip{ax}{z}=a\ip{x}{z}$. Whenever confusion might arise, we will denote an inner product on $\Es_\As$ by $\ip{\cdot}{\cdot}_\Es$ and an inner product on ${}_\As\Es$ by ${}_\Es\ip{\cdot}{\cdot}$. 
The direct sum $\Es\oplus \Fs$ of two right (left) unital modules, over the unital C*-algebra $\As$, equipped with the inner product defined by 
$\ip{x_1\oplus y_1}{x_2\oplus y_2}_{\Es\oplus\Fs}:=\ip{x_1}{x_1}_\Es+\ip{y_1}{y_2}_\Fs$, for all $x_1,x_2\in \Es$ and $y_1,y_2\in \Fs$, is a right (left) unital module over $\As$ called \emph{orthogonal direct sum} of $\Es_\As$ and $\Fs_\As$. 

\medskip 

A Hermitian inner product is \emph{positive} if $\ip{x}{x}\in \As_>:=\{a^*a  \ | \ a\in \As \}$, for all $x\in \Es$, where $\As_>$ denotes the positive part of the unital C*-algebra $\As$. An inner product is \emph{non-degenerate} if $\ip{x}{x}=0 \imp x=0_\As$. 
A unital right (left) \emph{Hilbert C*-module} $\Es$ over the unital C*-algebra $\As$ is a unital right (left) module on the unital C*-algebra $\As$, that is equipped with an $\As$-valued positive non-degenerated inner product, making it a complete metric space with respect to the norm 
$\|x\|:=\sqrt{\|\ip{x}{x}\|_\As}$.  

The family $\Ls(\Es_\As)$ of $\As$-linear operators on the right (left) unital $\As$-module $\Es_\As$ is a complex algebra with multiplication given by composition of maps. If the modules $\Es_\As$ and $\Fs_\As$ are equipped with $\As$-valued inner products, we say that a map $T:\Es\to\Fs$ is 
\emph{adjointable} if there exists another map $S:\Fs\to\Es$ such that $\ip{y}{T(x)}_\Fs=\ip{S(y)}{x}_\Es$, for all $x\in \Es$ and $y\in \Fs$. If the inner product is non-degenerate, adjointable maps are necessarily unique and $\As$-linear and, denoting by $T^*$ the unique adjoint of $T$ we have, for Hermitian inner products, $(T^*)^*=T$, $(T\circ S)^*=S^*\circ T^*$ and $(T_1+\alpha T_2)^*=T_1^*+\cj{\alpha}T_2^*$ so that the family 
$\Bs(\Es_\As)$ of adjointable operators is an involutive complex subalgebra of $\Ls(\Es_\As)$. In the case of Hilbert C*-modules, adjointable maps are always continuous and the algebra $\Bs(\Es_\As)$ is always a unital C*-algebra when equipped with the operator norm.\footnote{For all the details of these standard results on Hilbert C*-modules, the reader can consult N.Landsman~\cite{L,L1}, E.Lance~\cite{La}, N.Wegge-Olsen~\cite{WO} or S.Kaewunpai~\cite{Ka}.}  

\begin{definition}
If $\Es_\As$ is module with an inner product $\ip{\cdot}{\cdot}$ over the C*-algebra $\As$, its \emph{antimodule}, here denoted by $-\Es_\As$, 
is the $\As$-module $\Es_\As$ equipped with the opposite inner product. $-\ip{\cdot}{\cdot}$.
\end{definition}

\begin{definition}
A unital right \emph{Kre\u\i n C*-module} $\Ks_\As$ over the unital C*-algebra $\As$ is a unital right module over $\As$ with an $\As$-valued inner product such that $\Ks_\As$ is isomorphic to the ``indefinite'' orthogonal direct sum $\Ks_\As=\Ks_{+\As}\oplus \Ks_{-\As}$ of a Hilbert C*-module $\Ks_{+\As}$, with the antimodule $\Ks_{-\As}$ of a Hilbert C*-module $-\Ks_{-\As}$. Unital left Kre\u\i n C*-modules are similarly defined.  
\end{definition}
Any such decomposition of a Kre\u\i n C*-module over $\As$ will be called a \emph{fundamental decomposition}. 
Fundamental decompositions are in general not unique. 

\begin{definition}
To every fundamental decomposition $\Ks_\As=\Ks_{+\As}\oplus \Ks_{-\As}$ of a right unital Kre\u\i n \hbox{C*-module} $\Ks_\As$ over the unital C*-algebra 
$\As$ there is an associated \emph{fundamental symmetry}  
operator $J: \Ks_\As\to \Ks_\As$ given by 
\begin{equation*}
J (x):=x_+-x_-, \quad \text{where} \quad x=x_+ + x_- 
\end{equation*} 
is the direct sum decomposition of $x\in \Ks_\As$; and there is an associated Hilbert C*-module 
\begin{equation*}
|\Ks|_\As:=\Ks_{+\As}\oplus (-\Ks_{-\As}). 
\end{equation*}

\end{definition}

Making use of the unicity of the expression of vectors as sums of components belonging to the direct summands of a fundamental decomposition and of the definition of the fundamental symmetry associated to such decomposition, we obtain the following statement. 
\begin{proposition}
Let $\Ks_\As = \Ks_{+\As}\oplus \Ks_{-\As}$ be a fundamental decomposition of a Kre\u\i n $\As$-C*-module $\Ks_\As$,  
its fundamental symmetry operator $J: \Ks_\As\to \Ks_\As$ satisfies the following properties for all $x,y\in \Ks$ and $a\in \As$:
\begin{gather*}
J(x+y)=J(x)+J(y), \quad J(xa)=J(x)a, \quad 
J\circ J=\id_\Ks, 
\\
\ip{J(x)}{y}_\Ks=\ip{x}{J(y)}_\Ks, \quad \text{or equivalently}\quad \ip{J(x)}{J(y)}_\Ks=\ip{x}{y}_\Ks, 
\\
\pm\ip{(\id_\Ks\pm J) (x)}{(\id_\Ks\pm J)(x)}_\Ks\geq 0. 
\end{gather*}
Every map $J:\Ks\to\Ks$ that satisfies the previous list of properties is the fundamental symmetry of a unique fundamental decomposition, denoted by \hbox{$\Ks=\Ks^J_+\oplus \Ks^J_-$}, where $\Ks^J_\pm:=\{x\in \Ks \ | \ J(x)=\pm x\}$. The operators $(\id_\Ks\pm J)/2$ are a pair of orthogonal projections onto $\Ks^J_\pm$. 
The inner product in the Kre\u\i n C*-module $\Ks_\As=\Ks^J_{+\As}\oplus \Ks^J_{-\As}$ and the inner product in the associated Hilbert C*-module 
$|\Ks|^J_\As:=\Ks^J_{+\As}\oplus (-\Ks^J_{-\As})$ are related by 
\begin{equation*}\label{eq: k-k}
\ip{x}{y}_\Ks=\ip{J(x)}{y}_{|\Ks|^J}, \quad \ip{J(x)}{y}_\Ks=\ip{x}{y}_{|\Ks|^J}, \quad \forall x,y\in \Ks.  
\end{equation*}
\end{proposition}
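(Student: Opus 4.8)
The plan is to verify the stated identities in two directions. First, given a fundamental decomposition $\Ks_\As=\Ks_{+\As}\oplus\Ks_{-\As}$, I would check each listed property of the associated $J$ by direct computation using the uniqueness of the decomposition $x=x_++x_-$. Additivity and $\As$-linearity of $J$ follow because the direct-sum decomposition is itself additive and $\As$-linear (both summands being submodules). The relation $J\circ J=\id_\Ks$ is immediate since $J(J(x))=J(x_+-x_-)=x_++x_-=x$. For the symmetry $\ip{J(x)}{y}_\Ks=\ip{x}{J(y)}_\Ks$, I would expand both sides using orthogonality of $\Ks_{+\As}$ and $\Ks_{-\As}$ (so the cross terms $\ip{x_+}{y_-}$ and $\ip{x_-}{y_+}$ vanish), obtaining $\ip{x_+}{y_+}_\Ks-\ip{x_-}{y_-}_\Ks$ on both sides; the equivalent form $\ip{J(x)}{J(y)}_\Ks=\ip{x}{y}_\Ks$ then follows by replacing $y$ with $J(y)$ and using $J\circ J=\id$. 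For the last inequality, $(\id\pm J)(x)=2x_\pm$, so $\pm\ip{(\id\pm J)(x)}{(\id\pm J)(x)}_\Ks=\pm 4\ip{x_\pm}{x_\pm}_\Ks$; since $\Ks_{+\As}$ is a Hilbert C*-module the "$+$" case is $\geq 0$, and since $-\Ks_{-\As}$ is a Hilbert C*-module the "$-$" case, equal to $-4\ip{x_-}{x_-}_\Ks=4(-\ip{\cdot}{\cdot})(x_-,x_-)$, is also $\geq 0$.

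For the converse, suppose $J:\Ks\to\Ks$ satisfies all the listed properties. I would set $P_\pm:=(\id_\Ks\pm J)/2$ and $\Ks^J_\pm:=P_\pm(\Ks)=\{x:J(x)=\pm x\}$. From $J\circ J=\id$ one checks $P_\pm^2=P_\pm$, $P_+P_-=P_-P_+=0$, $P_++P_-=\id$, so every $x$ decomposes uniquely as $x=P_+x+P_-x$ with $P_\pm x\in\Ks^J_\pm$; additivity and $\As$-linearity of $J$ make $\Ks^J_\pm$ $\As$-submodules and $P_\pm$ $\As$-linear. Orthogonality of the two summands comes from the symmetry relation: for $x\in\Ks^J_+$, $y\in\Ks^J_-$, $\ip{x}{y}_\Ks=\ip{J(x)}{y}_\Ks=\ip{x}{J(y)}_\Ks=-\ip{x}{y}_\Ks$, hence $\ip{x}{y}_\Ks=0$. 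The projections $P_\pm$ are adjointable with $P_\pm^*=P_\pm$ (again from the symmetry of $J$), so they are orthogonal projections. Finally, the third property applied to elements of $\Ks^J_\pm$ shows $\ip{\cdot}{\cdot}_\Ks$ is positive on $\Ks^J_{+\As}$ and negative on $\Ks^J_{-\As}$; together with non-degeneracy inherited from $\Ks_\As$, this makes $\Ks^J_{+\As}$ and $-\Ks^J_{-\As}$ into Hilbert C*-modules (completeness of each summand follows since $P_\pm$ are continuous — being adjointable idempotents — hence the summands are closed in the complete space $\Ks$), so $\Ks=\Ks^J_+\oplus\Ks^J_-$ is a fundamental decomposition. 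Uniqueness is clear: any fundamental decomposition with the same fundamental symmetry $J$ must have summands $\{x:J(x)=\pm x\}$.

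For the last pair of identities relating $\ip{\cdot}{\cdot}_\Ks$ and $\ip{\cdot}{\cdot}_{|\Ks|^J}$, recall $\ip{\cdot}{\cdot}_{|\Ks|^J}$ on $\Ks^J_{+\As}\oplus(-\Ks^J_{-\As})$ is by definition $\ip{x_+}{y_+}_\Ks-\ip{x_-}{y_-}_\Ks$ where the second term's sign comes from passing to the antimodule. Then $\ip{J(x)}{y}_{|\Ks|^J}$: writing $J(x)=x_+-x_-$, its $|\Ks|^J$-components are $x_+$ and $-x_-$ wait — I would simply expand $\ip{J(x)}{y}_\Ks=\ip{x_+}{y_+}_\Ks-\ip{x_-}{y_-}_\Ks=\ip{x}{y}_{|\Ks|^J}$ and similarly $\ip{x}{y}_\Ks=\ip{x_+}{y_+}_\Ks+\ip{x_-}{y_-}_\Ks=\ip{J(x)}{y}_{|\Ks|^J}$, reading off both claimed relations directly.

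I do not anticipate a genuine obstacle here — every step is a short computation with the direct-sum decomposition and the adjoint. The one point requiring a little care is the converse, specifically arguing that the eigen-submodules $\Ks^J_\pm$ are \emph{complete} (hence honest Hilbert C*-modules, not just pre-Hilbert): this rests on $P_\pm$ being continuous, which holds because an adjointable idempotent on a module with non-degenerate inner product is automatically bounded, so its range is closed in $\Ks$. The rest is bookkeeping of cross terms vanishing by orthogonality.
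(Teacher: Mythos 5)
Your proposal is correct and follows exactly the route the paper indicates: the paper gives no written proof of this proposition beyond the one-sentence remark that it follows from the uniqueness of the components $x=x_++x_-$ in a fundamental decomposition, and your computations (cross terms vanishing by orthogonality of the summands, the algebra of the idempotents $(\id_\Ks\pm J)/2$, and continuity of $J$ via adjointability to get closedness of $\Ks^J_\pm$) are precisely the details that remark leaves to the reader. No discrepancy with the paper's approach.
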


\begin{theorem} \label{th: iso}
If $\Ks_\As$ is a unital right (left) Kre\u\i n C*-module over the unital C*-algebra $\As$ and $J_1,J_2$ are two fundamental symmetries of $\Ks$ with associated fundamental decompositions \hbox{$\Ks=\Ks^{J_1}_{+}\oplus \Ks^{J_1}_{-}$} and $\Ks=\Ks^{J_2}_{+}\oplus \Ks^{J_2}_{-}$, there are two 
$\As$-linear continuous bijective maps of Hilbert $\As$-C*-modules \hbox{$T^{J_2J_1}_{+}:\Ks^{J_1}_{+}\to \Ks^{J_2}_{+}$} and 
$T^{J_2J_1}_{-}:-(\Ks^{J_1}_{-})\to -(\Ks^{J_1}_{-})$, given by $T^{J_2J_1}_{+}(x^{J_1}_{+}):=x^{J_2}_{+}$ and 
$T^{J_2J_1}_{-}(x^{J_1}_{-}):=x^{J_2}_{-}$, where 
$x=x^{J_1}_{+} +x^{J_1}_{-}$ and $x=x^{J_2}_{+} + x^{J_2}_{-}$ are the direct sum decompositions of $x\in \Ks_\As$ in each of the fundamental decompositions.  
\end{theorem}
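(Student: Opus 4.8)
The plan is to identify the maps explicitly and then prove that they are bijective and bicontinuous. If $y\in\Ks^{J_1}_{+}$ is regarded as a vector of $\Ks$, its decomposition along $\Ks^{J_1}_{+}\oplus\Ks^{J_1}_{-}$ is $y=y+0$, so $T^{J_2J_1}_{+}(y)$ is just the $J_2$-positive component of $y$: that is, $T^{J_2J_1}_{+}=\tfrac12(\id_\Ks+J_2)\big|_{\Ks^{J_1}_{+}}$ and likewise $T^{J_2J_1}_{-}=\tfrac12(\id_\Ks-J_2)\big|_{\Ks^{J_1}_{-}}$, the latter landing in $-(\Ks^{J_2}_{-})$ (there is a clear misprint in the codomain of $T^{J_2J_1}_{-}$ in the statement). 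Being restrictions of the $\As$-linear additive operators $\tfrac12(\id_\Ks\pm J_2)$, the $T^{J_2J_1}_{\pm}$ are $\As$-linear, so the content is bijectivity together with continuity of the maps and of their inverses. I would proceed in four steps: (i) injectivity, from the signature of the inner product; (ii) realising $G:=J_1\circ J_2$ as a positive invertible operator on the Hilbert C*-module $\H:=|\Ks|^{J_1}$; (iii) surjectivity, via the square roots $G^{\pm1/2}$ and continuous functional calculus in the C*-algebra $\Bs(\H)$; (iv) boundedness of the maps, hence of their inverses.

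For injectivity I would argue: if $y\in\Ks^{J_1}_{+}$ with $\tfrac12(\id_\Ks+J_2)(y)=0$, then $y=\tfrac12(\id_\Ks-J_2)(y)\in\Ks^{J_2}_{-}$, so $\ip{y}{y}_\Ks\ge 0$ (a Hilbert C*-module inner product in $\Ks^{J_1}_{+}$) and $\ip{y}{y}_\Ks\le 0$ (the opposite of one in $-\Ks^{J_2}_{-}$); hence $\ip{y}{y}_\Ks=0$ and non-degeneracy in $\Ks^{J_1}_{+}$ forces $y=0$. Interchanging $+$ and $-$ handles $T^{J_2J_1}_{-}$ (and, en passant, $\Ks^{J_1}_{+}\cap\Ks^{J_2}_{-}=\{0\}=\Ks^{J_1}_{-}\cap\Ks^{J_2}_{+}$). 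Next, $G:=J_1\circ J_2$ is $\As$-linear and invertible, with set-inverse $J_2\circ J_1$. Using $\ip{u}{v}_{\H}=\ip{J_1(u)}{v}_\Ks$ and the properties of $J_1,J_2$ from the preceding Proposition one computes $\ip{G(x)}{y}_{\H}=\ip{J_2(x)}{y}_\Ks=\ip{x}{G(y)}_{\H}$, so $G$ is selfadjoint on $\H$, hence adjointable and bounded, i.e.\ $G\in\Bs(\H)$; the same computation shows $J_2\circ J_1\in\Bs(\H)$, so $G$ is invertible in $\Bs(\H)$; and $\ip{G(x)}{x}_{\H}=\ip{J_2(x)}{x}_\Ks=\ip{x}{x}_{|\Ks|^{J_2}}\ge 0$, so $G$ is positive. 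Thus $G$ is a positive invertible element of the C*-algebra $\Bs(\H)$, while $J_1$ is a selfadjoint unitary of $\H$.

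For surjectivity: conjugation by $J_1$ is a $*$-automorphism of $\Bs(\H)$ sending $G$ to $J_1GJ_1=G^{-1}$, hence $G^{1/2}$ to $G^{-1/2}$; so $W:=G^{-1/2}\in\Bs(\H)$ is invertible with $WJ_1W^{-1}=G^{-1/2}J_1G^{1/2}=J_1G=J_2$, and therefore $W$ restricts to $\As$-linear bijections $\Ks^{J_1}_{\pm}\to\Ks^{J_2}_{\pm}$. Moreover, for $z\in\Ks^{J_2}_{+}$, using $J_2G^{1/2}=G^{-1/2}J_2$ (obtained the same way from $J_2GJ_2=G^{-1}$),
\begin{equation*}
T^{J_2J_1}_{+}\big(W^{-1}z\big)=\tfrac12(\id_\Ks+J_2)\big(G^{1/2}z\big)=\tfrac12\big(G^{1/2}+G^{-1/2}\big)z=h(G)z,\qquad h(t):=\tfrac12\big(t^{1/2}+t^{-1/2}\big).
\end{equation*}
Since $h\ge 1$ on $\sigma(G)\subset(0,\infty)$ and $h(t)=h(t^{-1})$, $h(G)$ is invertible in $\Bs(\H)$ and commutes with $J_2$, so it preserves $\Ks^{J_2}_{+}$ and is invertible there, with inverse $(1/h)(G)\big|_{\Ks^{J_2}_{+}}$ (handled the same way). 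Hence $T^{J_2J_1}_{+}\circ W^{-1}$ is a bijection of $\Ks^{J_2}_{+}$, and since $W\colon\Ks^{J_1}_{+}\to\Ks^{J_2}_{+}$ is a bijection, $T^{J_2J_1}_{+}=\big(T^{J_2J_1}_{+}\circ W^{-1}\big)\circ W$ is a bijection $\Ks^{J_1}_{+}\to\Ks^{J_2}_{+}$; the negative parts are identical.

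For continuity: for $y\in\Ks^{J_1}_{+}$ the cross term vanishes, so $\ip{T^{J_2J_1}_{+}y}{T^{J_2J_1}_{+}y}_\Ks=\ip{\tfrac12(\id_\Ks+J_2)y}{y}_\Ks=\tfrac12\big(\ip{y}{y}_\Ks+\ip{G(y)}{y}_{\H}\big)$, and Cauchy--Schwarz in $\H$ gives $\|T^{J_2J_1}_{+}y\|^2_{\Ks^{J_2}_{+}}\le\tfrac12\big(1+\|G\|\big)\,\|y\|^2_{\Ks^{J_1}_{+}}$; thus $T^{J_2J_1}_{+}$ is bounded, and being a bounded bijection between Banach spaces it has bounded inverse by the open mapping theorem; similarly for $T^{J_2J_1}_{-}$. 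The only genuine difficulty is the surjectivity step: injectivity and boundedness are immediate, but producing the inverse---equivalently, upgrading $\Ks^{J_1}_{+}\cap\Ks^{J_2}_{-}=\{0\}$ to an actual bijection---requires operator theory, the natural device being the positive invertible ``transfer operator'' $G=J_1\circ J_2$ on $|\Ks|^{J_1}$ and its square root; here one uses that $\Bs(\H)$ really is a C*-algebra (among the standard facts on Hilbert C*-modules recalled above), so that continuous functional calculus is available.
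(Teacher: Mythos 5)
Your proof is correct, and it follows a genuinely different route from the paper's. The paper never exhibits an inverse: it shows that $T^{J_2J_1}_{+}$ is adjointable, with adjoint the analogous map $T^{J_1J_2}_{+}$ going the other way (whence continuity), proves injectivity by the same signature argument you give, and then obtains surjectivity indirectly --- a Cauchy-sequence estimate (the $J_2$-negative component of $x_n-x_m$ can only decrease the inner product) shows the range is closed, the fact that an adjointable map of Hilbert C*-modules with closed range has orthocomplemented range (Wegge-Olsen, Cor.~15.3.9) makes $\im(T^{J_2J_1}_{+})$ complementable, and the orthogonal complement vanishes because it sits inside $\ke(T^{J_1J_2}_{+})$, which is trivial by injectivity of the adjoint. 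You instead manufacture the inverse from the positive invertible ``transfer operator'' $G=J_1\circ J_2$ on $|\Ks|^{J_1}$ and its square root. The paper's argument is shorter and needs no functional calculus, but the real work is hidden in the quoted complementability theorem; yours is constructive (it produces $(T^{J_2J_1}_{+})^{-1}$ and the norm bound explicitly) and isolates the operator $G^{-1/2}$ intertwining $J_1$ and $J_2$, which is the C*-module version of the classical uniform-positivity argument for fundamental symmetries of a Kre\u\i n space. If you write it up, two steps deserve a line of justification: conjugation by $J_2$ is an algebra automorphism of $\Bs(|\Ks|^{J_1})$ but \emph{not} a $*$-automorphism there ($J_2$ is self-adjoint and unitary only for the Kre\u\i n inner product, not for that of $|\Ks|^{J_1}$), so the identity $J_2G^{1/2}J_2=(J_2GJ_2)^{1/2}=G^{-1/2}$ should be obtained by approximating $t\mapsto t^{1/2}$ uniformly by polynomials on the spectrum of $G$, which coincides with that of $G^{-1}=J_2GJ_2$ because conjugation by an invertible preserves spectra; and the implication ``$\ip{G(x)}{x}_{|\Ks|^{J_1}}\geq 0$ for all $x$ implies $G\geq 0$ in $\Bs(|\Ks|^{J_1})$'' is a genuine theorem about Hilbert C*-modules (Lance, Lemma~4.1), not a formality. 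You are also right that the codomain of $T^{J_2J_1}_{-}$ in the statement is a misprint for $-(\Ks^{J_2}_{-})$.
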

\begin{proof}
The map $T^{J_2J_1}_{+}: \Ks^{J_1}_{+}\to \Ks^{J_2}_{+}$ is adjointable with adjoint given by the map 
\begin{gather*}
T^{J_1J_2}_{+}: \Ks^{J_2}_{+}\to \Ks^{J_1}_{+}, \quad \text{defined by} \quad T^{J_1J_2}_+(y):=y^{J_1}_{+}, 
\quad \text{for $y\in \Ks^{J_2}_{+}$}, 
\end{gather*}
since $\ip{T^{J_2J_1}_{+}(x)}{y}_{\Ks^{J_2}_+} = \ip{x^{J_2}_{+}}{y^{J_2}_{+}}_{\Ks^{J_2}_+}=\ip{x}{y}_{\Ks}
=\ip{x^{J_1}_+}{y}_{\Ks^{J_1}_+}=\mip{x}{T^{J_1J_2}_{+}(y)}_{\Ks^{J_1}_+}$, for all $x\in \Ks^{J_1}_+$ and all $y\in \Ks^{J_2}_+$. 
In exactly the same way we have that $T^{J_2J_1}_{-}:\Ks^{J_1}_{-}\to \Ks^{J_2}_{-}$ is adjointable with adjoint given by the map 
$T^{J_1J_2}_{-}: \Ks^{J_2}_{-}\to \Ks^{J_1}_{-}$ defined by $T^{J_1J_2}_{-}(y):=y^{J_1}_{-}$, for all $y\in \Ks^{J_2}_{-}$. 
Since an adjointable operator between Hilbert C*-modules (and also between anti-Hilbert C*-modules) is necessarily continuous, both 
$T^{J_2J_1}_{+}, T^{J_2J_1}_{-}$ and their adjoints $T^{J_1J_2}_{+}, T^{J_1J_2}_{-}$ are continuous $\As$-linear maps. 

If $x,y\in \Ks^{J_1}_{+}$ and $T^{J_2J_1}_{+}(x)=T^{J_2J_1}_{+}(y)$, we have $x^{J_1}_+ - y^{J_1}_+=x-y=x^{J_2}_- - y^{J_2}_-$ and since we have $0\leq\ip{x-y}{x-y}_{\Ks^{J_1}_+}=\ip{x-y}{x-y}_\Ks=\ip{x^{J_2}_- - y^{J_2}_-}{x^{J_2}_- - y^{J_2}_-}_{\Ks^{J_2}_-}\leq 0$, via the non-degeneracy of the Kre\u\i n C*-module $\Ks_\As$, we obtain the injectivity of $T^{J_2J_1}_+$. 

Let $(x_n)$ be a sequence in $\Ks^{J_1}_+$ such that $T^{J_2J_1}_{+}(x_n)$ converges to a given point 
\hbox{$z\in \cj{\im(T^{J_2J_1}_{+})}\subset \Ks^{J_2}_{+}$}. Since $\ip{(x_n-x_m)^{J_2}_{-}}{(x_n-x_m)^{J_2}_{-}}_\Ks\leq 0$, we have
\begin{align*}
\|&x_n-x_m\|^2_{\Ks^{J_1}_+}=\ip{x_n-x_m}{x_n-x_m}_\Ks \\
&\leq \ip{T^{J_2J_1}_{+}(x_n-x_m)}{T^{J_2J_1}_{+}(x_n-x_m)}_\Ks=\|T^{J_2J_1}_{+}(x_n)-T^{J_2J_1}_{+}(x_m)\|^2_{\Ks^{J_2}_+}, 
\end{align*}
so that $(x_n)$, being a Cauchy sequence in the Hilbert C*-module $\Ks^{J_1}_{+}$, converges to a point $x_o\in \Ks^{J_1}_{+}$. 
By continuity of $T^{J_2J_1}_+$ we get $z=\lim_{n\to \infty}T^{J_2J_1}_{+}(x_n)=T^{J_2J_1}_{+}(x_o)\in \im(T^{J_1J_2}_+)$, that provides the closure of the range of $T^{J_2J_1}_+$.  

The fact that $T^{J_2J_1}_+:\Ks^{J_1}_+\to\Ks^{J_2}_+$ is an adjointable map between Hilbert C*-modules with closed range is equivalent (see for example N.Wegge-Olsen~\cite[corollary 15.3.9]{WO}) to the complementability of the submodule $\im(T^{J_2J_1}_+)\subset \Ks^{J_2}_+$.   

If $y\in \im(T^{J_2J_1}_{+})^\perp\subset \Ks^{J_2}_+$, 
for all $x\in \Ks^{J_1}_{+}$, we get $\ip{T^{J_1J_2}_{+}(y)}{x}_{\Ks^{J_1}_+}=\ip{y}{T^{J_2J_1}_{+}(x)}_{\Ks^{J_2}_+}=0$ 
and hence $y\in \ke(T^{J_1J_2}_{+})$. Since $T^{J_1J_2}_{+}$ is injective, we have $\im(T^{J_2J_1}_{+})^\perp=\{0\}$ and since 
$\im(T^{J_2J_1}_{+})$ is complementable, from $\Ks^{J_2}_+=\im(T^{J_2J_1}_{+})\oplus \im(T^{J_2J_1}_{+})^\perp=\im(T^{J_2J_1}_{+})$, we obtain the surjectivity of $T^{J_2J_1}_+$. 
\end{proof}

\begin{theorem}
If $\Ks_\As$ is a unital right (left) Kre\u\i n C*-module, over $\As$, with two fundamental symmetries $J_1$ and $J_2$ with associated direct sum decompositions $\Ks=\Ks^{J_1}_{+}\oplus \Ks^{J_1}_{-}=\Ks^{J_2}_{+}\oplus \Ks^{J_2}_{-}$, then the Hilbert C*-modules 
$|\Ks|^{J_1}:=\Ks^{J_1}_{+}\oplus \Ks^{J_1}_{-}$ 
and $|\Ks|^{J_2}:=\Ks^{J_2}_{+}\oplus \Ks^{J_2}_{-}$ 
have equivalent norms. Hence on $\Ks_\As$ there is a natural topology called the \emph{strong topology}.
\end{theorem}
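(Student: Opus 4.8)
The plan is to prove the (equivalent, slightly stronger) statement that the identity map $\id_\Ks$ of the underlying $\As$-module, viewed as a map $|\Ks|^{J_1}\to|\Ks|^{J_2}$ of Hilbert C*-modules, is bounded; since the hypotheses are symmetric in $J_1$ and $J_2$, the same statement with $J_1$ and $J_2$ exchanged gives boundedness of the inverse, hence equivalence of the two norms. The first step is to record, from the two identities $\ip{Jx}{y}_{|\Ks|^{J}}=\ip{x}{y}_\Ks$ and $\ip{x}{y}_{|\Ks|^{J}}=\ip{Jx}{y}_\Ks$ of the preceding proposition, the relations $\ip{J_1(u)}{v}_{|\Ks|^{J_1}}=\ip{u}{v}_\Ks$ (take $J=J_1$ in the first) and $\ip{x}{y}_{|\Ks|^{J_2}}=\ip{J_2(x)}{y}_\Ks$ (take $J=J_2$ in the second), which combine to
\[
\ip{x}{y}_{|\Ks|^{J_2}}=\ip{J_1\circ J_2\,(x)}{y}_{|\Ks|^{J_1}},\qquad x,y\in\Ks;
\]
in particular $\|x\|_{|\Ks|^{J_2}}^2=\big\|\,\ip{J_1\circ J_2\,(x)}{x}_{|\Ks|^{J_1}}\,\big\|_\As$.

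The crucial point — and the only real obstacle — is that $J_2$, which a priori is only known to be bounded on $|\Ks|^{J_2}$, is actually a bounded operator on the Hilbert C*-module $|\Ks|^{J_1}$. I would prove this by producing an adjoint: using that $J_1,J_2$ are $\As$-linear and $\Ks$-selfadjoint ($\ip{J_i(x)}{y}_\Ks=\ip{x}{J_i(y)}_\Ks$, from the proposition) and that $J_1\circ J_1=\id_\Ks$, a short computation with the relations above yields
\[
\ip{J_2(x)}{y}_{|\Ks|^{J_1}}=\ip{x}{(J_1\circ J_2\circ J_1)(y)}_{|\Ks|^{J_1}},\qquad x,y\in\Ks,
\]
so that $J_2$ — and, being a composite of adjointables, also $J_1\circ J_2$ ($J_1$ is plainly adjointable on $|\Ks|^{J_1}$) — is an adjointable, hence continuous, operator on $|\Ks|^{J_1}_\As$. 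Applying the Cauchy–Schwarz inequality in the Hilbert C*-module $|\Ks|^{J_1}$ to the formula above for $\|x\|_{|\Ks|^{J_2}}^2$ then gives
\[
\|x\|_{|\Ks|^{J_2}}^2\le\|J_1\circ J_2\,(x)\|_{|\Ks|^{J_1}}\,\|x\|_{|\Ks|^{J_1}}\le\|J_1\circ J_2\|\;\|x\|_{|\Ks|^{J_1}}^2,
\]
that is, $\|x\|_{|\Ks|^{J_2}}\le\|J_1\circ J_2\|^{1/2}\,\|x\|_{|\Ks|^{J_1}}$.

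Interchanging the roles of $J_1$ and $J_2$ gives the reverse estimate $\|x\|_{|\Ks|^{J_1}}\le\|J_2\circ J_1\|^{1/2}\,\|x\|_{|\Ks|^{J_2}}$, so the two norms are Lipschitz equivalent; as $J$ ranges over all fundamental symmetries of $\Ks$ the norms $\|\cdot\|_{|\Ks|^{J}}$ are then pairwise equivalent and hence determine a single topology on $\Ks_\As$, the strong topology. All the content is in the boundedness of $J_2$ on $|\Ks|^{J_1}$; once that is secured, Cauchy–Schwarz and the $J_1\leftrightarrow J_2$ symmetry close the argument. (An alternative, slightly longer, route is to write $\id_\Ks:|\Ks|^{J_1}\to|\Ks|^{J_2}$ as a $2\times2$ array of the ``block'' maps $P^{J_2}_{\pm}|_{\Ks^{J_1}_{\pm}}$ between the direct summands — two of these are the continuous isomorphisms $T^{J_2J_1}_{\pm}$ furnished by Theorem~\ref{th: iso}, and the two mixed ones are again adjointable between the relevant Hilbert and anti-Hilbert C*-modules, hence bounded — and to estimate the four blocks separately.)
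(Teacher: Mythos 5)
Your argument is correct, but it takes a genuinely different route from the paper's. The paper's proof invokes Theorem~\ref{th: iso}: it takes the two bijective adjointable maps $T^{J_2J_1}_{+}$ and $T^{J_2J_1}_{-}$ between the positive and negative summands and declares their direct sum to be a bicontinuous bijection $|\Ks|^{J_1}\to|\Ks|^{J_2}$, so it rests on the full strength of that theorem (injectivity, closed range, complementability, surjectivity). You instead work directly with the identity map, and the entire burden is carried by the observation that $J_2$ is adjointable on the Hilbert C*-module $|\Ks|^{J_1}$ with adjoint $J_1\circ J_2\circ J_1$, hence bounded; your two displayed identities do follow from the listed properties of fundamental symmetries ($\As$-linearity, $J_i^2=\id_\Ks$, $\ip{J_i(x)}{y}_\Ks=\ip{x}{J_i(y)}_\Ks$ and the stated relations between $\ip{\cdot}{\cdot}_\Ks$ and $\ip{\cdot}{\cdot}_{|\Ks|^{J_i}}$), and Cauchy--Schwarz plus the $J_1\leftrightarrow J_2$ symmetry then close the argument. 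What your route buys: it bypasses Theorem~\ref{th: iso} entirely, using only the elementary proposition and the standard fact that adjointable maps between Hilbert C*-modules are bounded, and it proves exactly what the ``strong topology'' conclusion requires, namely that the identity map itself is bicontinuous. Indeed, the paper's direct-sum map $T^{J_2J_1}_{+}\oplus T^{J_2J_1}_{-}$ equals $\tfrac{1}{2}(\id_\Ks+J_2\circ J_1)$ and is not the identity of $\Ks$, so as written the paper exhibits an isomorphism of Banach spaces rather than literally the equivalence of the two norms on the common underlying module; your argument (and your parenthetical $2\times 2$-block alternative, which is essentially the paper's approach completed by the two mixed blocks) removes that small ambiguity.
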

\begin{proof}
By theorem~\ref{th: iso}, we have two bijective $\As$-linear adjointable (hence continuous) maps 
\begin{gather*}
T^{J_2J_1}_{+}: \Ks^{J_1}_{+}\to \Ks^{J_2}_{+}, \quad 
T^{J_2J_1}_{-}: -\Ks^{J_1}_{-}\to -\Ks^{J_2}_{-}.
\end{gather*}

It follows immediately that their direct sum map 
$T^{J_2J_1}_{+}\oplus T^{J_2J_1}_{-}: |\Ks|^{J_1}\to |\Ks|^{J_2}$  
is a bijective linear continuous map with continuous inverse and hence as Banach spaces $|\Ks|^{J_1}$ and $|\Ks|^{J_2}$ have equivalent norms.  
\end{proof}

\begin{proposition}\label{proposition: a-k}
Let $\Ks_A=\Ks^J_{+}\oplus \Ks^J_{-}$ be the fundamental decomposition of the right (left) unital Kre\u\i n C*-module $\Ks_\As$ associated to the fundamental symmetry $J$ and with associated Hilbert C*-module $|\Ks|^J_\As$. 
The operator $T: \Ks_\As\to \Ks_\As$ is an adjointable operator in $\Ks_\As$ if and only if it is adjointable in $|K|^J_\As$. 
The adjoint $T^*$ of $T$ in the Kre\u\i n C*-module $\Ks_\As$ and the adjoint $T^{\dag_J}$ of $T$ in the Hilbert C*-module $|\Ks|^J_\As$ are related by the following formulas:
\begin{equation*}
T^{\dag_J}=J\circ T^*\circ J, \quad T^*=J\circ T^{\dag_J}\circ J.
\end{equation*}

The family $\Bs(\Ks_\As)$ of adjointable operators in the unital right (left) Kre\u\i n \hbox{C*-module} $\Ks_\As$ coincides as a set with the C*-algebra 
$\Bs(|\Ks|^J_\As)$ of adjointable operators in the unital right (left) Hilbert C*-module $|\Ks|_\As$.
\end{proposition}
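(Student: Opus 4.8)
The plan is to establish the correspondence between adjointability in $\Ks_\As$ and in $|\Ks|^J_\As$ by exploiting the relations between the two inner products, namely $\ip{x}{y}_\Ks=\ip{J(x)}{y}_{|\Ks|^J}$ and $\ip{J(x)}{y}_\Ks=\ip{x}{y}_{|\Ks|^J}$ from the preceding proposition, together with the fact (established there) that $J$ is self-adjoint and $J\circ J=\id_\Ks$ for \emph{both} inner products. The strategy is: first prove the adjoint formulas under the assumption that the relevant adjoints exist, then use those formulas to prove the biconditional statement about existence, and finally conclude the equality of the two algebras as sets.

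First I would suppose $T$ is adjointable in $|\Ks|^J_\As$ with adjoint $T^{\dag_J}$, and verify directly that $J\circ T^{\dag_J}\circ J$ serves as an adjoint of $T$ in $\Ks_\As$: for all $x,y\in\Ks$,
\begin{equation*}
\ip{x}{T(y)}_\Ks=\ip{J(x)}{T(y)}_{|\Ks|^J}=\ip{T^{\dag_J}(J(x))}{y}_{|\Ks|^J}=\ip{J(T^{\dag_J}(J(x)))}{y}_\Ks,
\end{equation*}
using the first inner-product relation twice (once in each direction, with the substitution of $J(x)$ for $x$ being legitimate since $J$ is a bijection of $\Ks$). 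Since the inner product on the Kre\u\i n C*-module is non-degenerate, the adjoint $T^*$ in $\Ks_\As$ is unique, so $T^*=J\circ T^{\dag_J}\circ J$, and composing with $J$ on both sides (using $J\circ J=\id$) yields $T^{\dag_J}=J\circ T^*\circ J$. The reverse implication is entirely symmetric: if $T$ is adjointable in $\Ks_\As$, then $J\circ T^*\circ J$ is checked to be an adjoint in $|\Ks|^J_\As$ by the same computation with the roles of the two inner products interchanged, and uniqueness of adjoints in the Hilbert C*-module gives the formula. This simultaneously proves the biconditional and both displayed formulas.

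For the final sentence, once we know $T\in\Bs(\Ks_\As)$ if and only if $T\in\Bs(|\Ks|^J_\As)$ as a statement about membership, the two families coincide as sets; I would also note that the map $T\mapsto J\circ T\circ J$ is a bijection of $\Bs(|\Ks|^J_\As)$ onto itself that intertwines the two involutions $T^*$ and $T^{\dag_J}$, so the identification is compatible with the *-algebra structures up to this inner automorphism (this is implicit in the statement that the set is the C*-algebra $\Bs(|\Ks|^J_\As)$, since linearity and multiplicativity are independent of the inner product). I do not anticipate a serious obstacle here; the only point requiring a little care is the legitimacy of substituting $J(x)$ for $x$ in the inner-product identities, which is fine precisely because $J$ is a surjective $\As$-linear self-map of $\Ks$, and the non-degeneracy hypothesis that guarantees uniqueness of adjoints — both of which are available from the earlier results in the excerpt.
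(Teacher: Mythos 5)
Your argument is correct and is essentially the paper's own proof: both rely on the identities $\ip{x}{y}_\Ks=\ip{J(x)}{y}_{|\Ks|^J}$ and $J\circ J=\id_\Ks$ to transport the defining adjointness relation from one inner product to the other, with the substitution $x\mapsto J(x)$ justified by surjectivity of $J$; you merely start from the Hilbert-module side where the paper starts from the Kre\u\i n side, and the two directions are symmetric. Your closing observation that $T\mapsto J\circ T\circ J$ intertwines the two involutions is a harmless bonus that anticipates the subsequent theorem.
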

\begin{proof}
If $T$ is adjointable in the Kre\u\i n C*-module $\Ks_\As$ with adjoint $T^*$, we have, for all $x,y\in \Ks_\As$,
$\ip{x}{T(y)}_{\Ks}=\ip{T^*(x)}{y}_{\Ks}$ and so $\ip{J(x)}{T(y)}_{|\Ks|^J}=\ip{J(T^*(x))}{y}_{|\Ks|^J}$ and 
taking $x:=J(z)$ we get 
$\ip{z}{T(y)}_{|\Ks|^J}=\ip{J\circ T^*\circ J(z)}{y}_{|\Ks|^J}$ that gives the adjointability of $T$ in $|\Ks|^J_\As$ with adjoint 
$T^{\dag_J}=J\circ T^*\circ J$. 
Following the same passages in the reverse order establishes the equivalence of the notions of adjointability in $\Ks_\As$ and in $|\Ks|^J_\As$. 
\end{proof}

\begin{theorem}
The algebra $\Bs(\Ks_\As)$ of adjointable endomorphisms of a unital right (left) Kre\u\i n \hbox{C*-module} $\Ks_\As$ is a Kre\u\i n-C*-algebra.  
\end{theorem}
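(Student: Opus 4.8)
The plan is to reduce the statement to Proposition~\ref{proposition: a-k}: once we know that $\Bs(\Ks_\As)$ coincides, as a complex algebra, with the C*-algebra $\Bs(|\Ks|^J_\As)$ of adjointable operators on the associated Hilbert C*-module, it only remains to produce the data (topology, fundamental symmetry, Banach algebra norm) required by the definition of Kre\u\i n C*-algebra.

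First I would fix a fundamental symmetry $J$ of $\Ks_\As$ together with its fundamental decomposition and associated Hilbert C*-module $|\Ks|^J_\As$, recalling from the earlier results that $J^*=J$ and $J\circ J=\id_\Ks$ in $\Ks_\As$, and that the Hilbert-module adjoint $\dag_J$ and the Kre\u\i n adjoint $*$ on the common set $\Bs(\Ks_\As)=\Bs(|\Ks|^J_\As)$ are related by $T^{\dag_J}=J\circ T^*\circ J$ and $T^*=J\circ T^{\dag_J}\circ J$ (Proposition~\ref{proposition: a-k}). I would equip $\Bs(\Ks_\As)$ with the operator-norm topology of the C*-algebra $\Bs(|\Ks|^J_\As)$; this is a complete complex topological vector space, the product is continuous by submultiplicativity, and the Kre\u\i n involution $T\mapsto T^*$ is continuous because $J$ is a $\dag_J$-unitary (indeed $J^{\dag_J}=J\circ J^*\circ J=J$ and $J\circ J=\id_\Ks$), so conjugation by $J$ is an isometric $*$-automorphism of $\Bs(|\Ks|^J_\As)$ and hence $\|T^*\|_{\dag_J}=\|J\circ T^{\dag_J}\circ J\|_{\dag_J}=\|T^{\dag_J}\|_{\dag_J}=\|T\|_{\dag_J}$. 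Thus $\Bs(\Ks_\As)$ is an involutive complete complex topological algebra.

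Next I would propose $\alpha:=\Ad_J$, that is $\alpha(T):=J\circ T\circ J$, as a fundamental symmetry of this algebra: it is complex-linear, multiplicative (using $J\circ J=\id_\Ks$), satisfies $\alpha\circ\alpha=\iota_{\Bs(\Ks_\As)}$, is continuous (isometric for $\|\cdot\|_{\dag_J}$, as above), and is an automorphism of the involutive algebra since $\alpha(T)^*=(J\circ T\circ J)^*=J^*\circ T^*\circ J^*=J\circ T^*\circ J=\alpha(T^*)$, using $J^*=J$. Finally, for the C*-type identity I would observe that $\alpha(T^*)=J\circ T^*\circ J=T^{\dag_J}$, so that for every $T\in\Bs(\Ks_\As)$
\[
\|\alpha(T^*)\,T\|_{\dag_J}=\|T^{\dag_J}\,T\|_{\dag_J}=\|T\|_{\dag_J}^2,
\]
the last step being the C*-identity in $\Bs(|\Ks|^J_\As)$. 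Hence $\|\cdot\|_{\dag_J}$ is a Banach algebra norm inducing the chosen topology and satisfying $\|\alpha(a^*)a\|_{\dag_J}=\|a\|_{\dag_J}^2$, so $\Bs(\Ks_\As)$ is a Kre\u\i n C*-algebra, with fundamental symmetry $\Ad_J$ and with $\Bs(\Ks_\As)^{\Ad_J}=\Bs(|\Ks|^J_\As)$.

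The only genuinely non-automatic point — and the closest thing to an obstacle — is that the topology on $\Bs(\Ks_\As)$ must not depend on the auxiliary choice of $J$. I would settle this by invoking the earlier theorem that for two fundamental symmetries $J_1,J_2$ the Hilbert C*-modules $|\Ks|^{J_1}$ and $|\Ks|^{J_2}$ have equivalent norms; a routine estimate on operator norms then shows that $\|\cdot\|_{\dag_{J_1}}$ and $\|\cdot\|_{\dag_{J_2}}$ are equivalent on $\Bs(\Ks_\As)$, so the topology is canonical and each fundamental symmetry $J$ of $\Ks_\As$ merely provides one of the (generally several, topologically equivalent) fundamental symmetries $\Ad_J$ of the Kre\u\i n C*-algebra $\Bs(\Ks_\As)$. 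Everything else consists of the straightforward verifications displayed above, all immediate from Proposition~\ref{proposition: a-k}.
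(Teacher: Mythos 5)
Your proposal is correct and follows essentially the same route as the paper: identify $\Bs(\Ks_\As)$ with the C*-algebra $\Bs(|\Ks|^J_\As)$ via Proposition~\ref{proposition: a-k}, take its operator norm, define the fundamental symmetry $\alpha(T):=J\circ T\circ J$, and verify the C*-type identity from $\alpha(T^*)=T^{\dag_J}$. Your additional remarks on the continuity of the Kre\u\i n involution and on the independence of the induced topology from the choice of $J$ are sound and merely make explicit what the paper leaves implicit.
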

\begin{proof}
By the previous proposition, we see that $\Bs(\Ks_\As)=\Bs(|\Ks|^J_\As)$ as sets and also as unital associative algebras, since the operations of addition and scalar multiplication are the same. Taking on $\Bs(\Ks_\As)$ the operator norm $\|\cdot\|_J$ defined in the C*-algebra $\Bs(|\Ks|^J_\As)$ of the unital Hilbert C*-module $|\Ks|^J_\As$, we see that $\Bs(\Ks_\As)$ is a Banach space. The involution on the algebra $\Bs(\Ks_\As)$ is obtained via the Kre\u\i n C*-module adjoint $T\mapsto T^*$. In order to complete the proof that $\Bs(\Ks_\As)$ is a Kre\u\i n C*-algebra, we need to provide an involutive automorphism $\alpha: \Bs(\Ks_\As)\to \Bs(\Ks_\As)$ such that $\|\alpha(T^*)\circ T\|_J=\|T\|_J^2$ for all $T\in \Bs(\Ks_\As)$.
For this purpose, for all $T\in\Bs(\Ks_\As)$, we define $\alpha(T):=J\circ T\circ J$ and, using the C*-property of $\Bs(|\Ks|^J_\As)$, 
verify that $\|\alpha(T^*)\circ T\|_J=\|(J\circ T\circ J) \circ T\|_J=\|T^{\dag_J}\circ T\|_J=\|T\|_J^2$. 
\end{proof}

\begin{proposition}
Let $\Ks_\As$ be a unital right (left) Kre\u\i n C*-module. Any two fundamental symmetries $J_1,J_2\in \Bs(\Ks_\As)$ of $\Ks$ are unitarily equivalent.  
\end{proposition}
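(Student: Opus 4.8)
The plan is to exhibit a Kre\u\i n-unitary $U\in\Bs(\Ks_\As)$, i.e.\ an element with $U^*U=UU^*=\id_\Ks$ for the Kre\u\i n adjoint, such that $UJ_1U^*=J_2$; this $U$ will be produced by a polar-decomposition argument applied to the natural intertwiner between the two fundamental decompositions of $\Ks$.

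First I would introduce the operator $S:=P_2P_1+Q_2Q_1=\tfrac12(\id_\Ks+J_2J_1)\in\Bs(\Ks_\As)$, where $P_i:=(\id_\Ks+J_i)/2$ and $Q_i:=(\id_\Ks-J_i)/2$ are the orthogonal projections associated to $J_i$. A one-line computation gives $SJ_1=J_2S$, while restricting $S$ to the summands of $\Ks=\Ks^{J_1}_{+}\oplus\Ks^{J_1}_{-}$ shows that $S|_{\Ks^{J_1}_{+}}=T^{J_2J_1}_{+}$ and $S|_{\Ks^{J_1}_{-}}=T^{J_2J_1}_{-}$ are precisely the maps of Theorem~\ref{th: iso}; hence $S$ carries $\Ks^{J_1}_{\pm}$ bijectively onto $\Ks^{J_2}_{\pm}$ and is therefore a bijection of $\Ks$. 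Applying the same reasoning to $S^*=\tfrac12(\id_\Ks+J_1J_2)$, with the roles of $J_1,J_2$ interchanged, shows that $S^*$ is bijective as well; the open mapping theorem then makes $S^{-1}$ and $(S^*)^{-1}$ bounded, and the identity $\ip{y}{S^{-1}x}_\Ks=\ip{(S^*)^{-1}y}{x}_\Ks$ shows that $S^{-1}$ is adjointable. Thus $S$ is invertible in the algebra $\Bs(\Ks_\As)$.

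Next I would prove that $A:=S^*S$ is a positive invertible element of the C*-algebra $\Bs(|\Ks|^{J_1}_\As)$, which by Proposition~\ref{proposition: a-k} coincides with $\Bs(\Ks_\As)$ as a set, with involution $\dag_{J_1}$. From $SJ_1=J_2S$ one gets that $A$ commutes with $J_1$, whence $A^{\dag_{J_1}}=J_1A^*J_1=A$; and, using the relations between $\ip{\cdot}{\cdot}_\Ks$, $\ip{\cdot}{\cdot}_{|\Ks|^{J_1}}$ and $\ip{\cdot}{\cdot}_{|\Ks|^{J_2}}$ recalled before Theorem~\ref{th: iso},
\[ \ip{Ax}{x}_{|\Ks|^{J_1}}=\ip{J_1S^*Sx}{x}_\Ks=\ip{S^*J_2Sx}{x}_\Ks=\ip{J_2Sx}{Sx}_\Ks=\ip{Sx}{Sx}_{|\Ks|^{J_2}}\ge 0 \]
for every $x\in\Ks$, so that $A\ge 0$ in $\Bs(|\Ks|^{J_1}_\As)$ by the standard positivity criterion for adjointable operators on a Hilbert C*-module; invertibility of $A$ follows from that of $S$. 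I would then let $R:=A^{1/2}$ be the positive invertible square root of $A$ computed in the C*-algebra $\Bs(|\Ks|^{J_1}_\As)$. Since $R$ lies in the C*-subalgebra generated by $A$, it commutes with $J_1$, whence $R^*=J_1R^{\dag_{J_1}}J_1=J_1RJ_1=R$, so $R$ is simultaneously $\dag_{J_1}$-self-adjoint and Kre\u\i n-self-adjoint and satisfies $(R^{-1})^*=R^{-1}$.

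Finally I would set $U:=SR^{-1}\in\Bs(\Ks_\As)$ and check the two claims. Kre\u\i n-unitarity: $U^*U=R^{-1}S^*SR^{-1}=R^{-1}AR^{-1}=\id_\Ks$ and $UU^*=SR^{-2}S^*=SA^{-1}S^*=SS^{-1}(S^*)^{-1}S^*=\id_\Ks$. Intertwining: since $R^{-1}$ commutes with $J_1$ and $SJ_1=J_2S$,
\[ UJ_1U^*=UJ_1U^{-1}=S\,(R^{-1}J_1R)\,S^{-1}=SJ_1S^{-1}=J_2, \]
so $J_1$ and $J_2$ are unitarily equivalent in $\Bs(\Ks_\As)$; the left-module case is identical. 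I expect the only step that is not purely formal to be the positivity of $A=S^*S$ in $\Bs(|\Ks|^{J_1}_\As)$: it is here that Theorem~\ref{th: iso} (which makes $S$ invertible), Proposition~\ref{proposition: a-k} (which identifies the two operator algebras and relates their adjoints) and the compatibility of the Kre\u\i n and Hilbert inner products all enter. Granting this, the remainder is simply the polar decomposition $S=UR$ of an invertible element of the Kre\u\i n C*-algebra $\Bs(\Ks_\As)$, with the ``modulus'' $R$ taken inside the genuine C*-algebra $\Bs(|\Ks|^{J_1}_\As)$.
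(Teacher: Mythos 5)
Your argument is correct, but it takes a genuinely different route from the paper's --- and, as far as I can tell, it is the paper's one-line proof that needs your extra step. The paper simply sets $U:=T^{J_2J_1}_{+}\oplus T^{J_2J_1}_{-}$, which is exactly your operator $S=\tfrac12(\id_\Ks+J_2J_1)$, notes $U\circ J_1=J_2\circ U$, and asserts $U^*=U^{-1}$. But $U^*=\tfrac12(\id_\Ks+J_1J_2)$, so $U^*U=\tfrac14(2\,\id_\Ks+J_1J_2+J_2J_1)=\id_\Ks-\tfrac14(J_1-J_2)^2$, which is not the identity in general: already for $\Ks=\CC^{1,1}$ over $\As=\CC$, with $J_1=\mathrm{diag}(1,-1)$ and $J_2$ a ``boosted'' fundamental symmetry, one computes $U^*U=\cosh^2(t/2)\,\id_\Ks$. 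So the paper's intertwiner is invertible and intertwines the two symmetries but is not Kre\u\i n-unitary, and your polar-decomposition step --- forming $A=S^*S$, checking it is positive and invertible in the C*-algebra $\Bs(|\Ks|^{J_1}_\As)$, extracting $R=A^{1/2}$ there, verifying that $R$ commutes with $J_1$ and is therefore simultaneously $\dag_{J_1}$- and Kre\u\i n-self-adjoint, and replacing $S$ by $U=SR^{-1}$ --- is precisely the correction that makes the statement true as stated. The cost of your route is that it invokes the positivity criterion $\ip{Ax}{x}\ge 0$ for adjointable operators on a Hilbert C*-module and the continuous functional calculus, neither of which the paper's argument touches; the gain is a proof that actually closes, and all of its inputs (bijectivity of the $T^{J_2J_1}_\pm$ from Theorem~\ref{th: iso}, the identification of $\Bs(\Ks_\As)$ with $\Bs(|\Ks|^{J_1}_\As)$ and the adjoint formulas from Proposition~\ref{proposition: a-k}) are already available in the paper. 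The individual steps of your write-up all check out.
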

\begin{proof}
With the notation used in theorem~\ref{th: iso}, consider the adjointable operator \hbox{$U:=T^{J_2J_1}_+\oplus T^{J_2J_1}_-$} and note that 
$U\circ J_1=J_2\circ U$ with $U^*=U^{-1}$. 
\end{proof}

\begin{example}
Every Kre\u\i n-space is a Kre\u\i n-C*-module over the C*-algebra $\CC$, the complexification $\MM_\CC:=\MM\otimes_\RR\CC$ of Minkowski space in special relativity being probably the most important example. 
\xqed{\lrcorner}
\end{example}

\begin{example}\label{ex: man}
Let $M$ be a semi-Riemannian manifold that for now (although this makes the situation less interesting for applications to physics) we suppose to be compact. If the manifold is also supposed to be space-orientable and time-orientable (for a specific example, consider $\TT^2$ with the indefinite metric coming from the product of a copy of $\TT$ with positive metric and another copy with negative metric), the module $\Gamma(T(M))$ of continuous sections of its tangent bundle $T(M)$ is a unital Kre\u\i n C*-module over the unital C*-algebra $C(M)$ of continuous functions. 

Note that, when a semi-Riemannian manifold is not space-orientable and time-orientable, although each fiber of the tangent bundle $T(M)$ is still a 
Kre\u\i n space, the module $\Gamma(T(M))$ of continuous vector fields, fails to be a Kre\u\i n C*-module over the C*-algebra $C(M)$, because in this case $\Gamma(T(M))$ does not admit a global splitting as a direct sum of a Hilbert and an anti-Hilbert C*-module 
(equivalently the tangent bundle $T(M)$ is not a Whitney direct sum of a positive definite and a negative definite sub-bundle). 

As a consequence, this very interesting kind of complete semi-definite Hilbert C*-modules do not admit a globally defined fundamental symmetry! 
\xqed{\lrcorner} 
\end{example}

\section{Kre\u\i n C*-modules over Kre\u\i n C*-algebras}\label{sec: k-mod-2}

\begin{definition}
A \textbf{left Kre\u\i n C*-module} over a Kre\u\i n C*-algebra $\As$ is a complete topological vector space that is also a left (unital) module $\Ks$ over $\As$ with the the following properties: 
\begin{itemize}
\item
$\Ks$ is equipped with an $\As$-valued inner product ${}_\As\mip{\cdot}{\cdot}:\Ks\times\Ks\to\As$ such that
\begin{gather*}
{}_\As\mip{x+y}{z}={}_\As\mip{x}{z}+{}_\As\mip{y}{z}, \quad \forall x,y,z\in\Ks, 
\\
{} _\As\mip{a x}{y}=a\cdot {}_\As\mip{x}{y},\quad \forall x,y\in\Ks,\ \forall a\in\As, 
\\
{}_\As\mip{x}{y}^*={}_\As\mip{y}{x},\quad \forall x,y\in\Ks, 
\\
\forall y\in \Ks, \ {}_\As\mip{x}{y}=0 \Rightarrow x=0,  
\end{gather*}	
\item
there exists a \emph{fundamental symmetry}
$J_\As:\Ks\to\Ks$ such that $J\circ J=\id_\Ks$, 
\begin{gather*}
J_\As(x+y)=J_\As(x)+J_\As(y), \quad \forall x,y\in\Ks, 
\\ 
J_\As(a \cdot x)=\alpha(a)\cdot J_\As(x), \quad \forall x\in \Ks, \ \forall a\in \As, 
\\
\alpha({}_\As\mip{x}{y})={}_\As\mip{J_\As(x)}{J_\As(y)}, \quad \forall x,y\in\Ks, 
\end{gather*} 	
\item 
for the given choice of fundamental symmetries $\alpha$ on $\As$ and $J_\As$ on $\Ks$, we have that the map $(x,y)\mapsto {}_\As\mip{x}{J_\As(y)}$ gives to $\Ks$ the structure of a Hilbert C*-module over the C*-algebra $\As^\alpha$ whose norm induces the original topology of $\Ks$.  
\end{itemize}
\end{definition}
In a perfectly similar way, there is a definition of \emph{right Kre\u\i n C*-module $\Ks$} over a Kre\u\i n C*-algebra $\Bs$, where the $\Bs$-valued inner product $\mip{\cdot}{\cdot}_\Bs:\Ks\times\Ks\to\Bs$ satisfies $\mip{x}{yb}_\Bs=\mip{x}{y}_\Bs \cdot b$, for all $b\in \Bs$ and all $x,y\in \Ks$; the fundamental symmetry $J_\Bs$ satisfies $J_\Bs(x\cdot b)=J_\Bs(x)\cdot \beta(b)$, for all $x\in \Ks$ and $b\in \Bs$; and where all the other properties remain essentially unchanged. 

\medskip 

The following definition of bimodule, whenever we further impose the additional fullness requirements 
$\As={}_\As\mip{\Ks}{\Ks}:=\cj{\Span\{{}_\As\mip{x}{y}\ | \ x,y\in\Ks\}}$ and 
$\Bs=\mip{\Ks}{\Ks}_\Bs:=\cj{\Span\{\mip{x}{y}_\Bs \ | \ x,y\in \Ks\}}$, extends to the Kre\u\i n C*-algebras context the usual notion of imprimitivity Hilbert C*-bimodule that provides the well-known (strong) Morita equivalence of C*-algebras (see also remark~\ref{rem: morita}). 
\begin{definition}
A \textbf{Kre\u\i n C*-bimodule} ${}_\As\Ks_\Bs$ is a left Kre\u\i n C*-module over $\As$ that is the same time a right Kre\u\i n C*-module over 
$\Bs$ with additional properties: $(a\cdot x)\cdot b=a\cdot(x\cdot b)$, $J_\As=J_\Bs$ and with right and left inner products related via: 
${}_\As\mip{x}{y}x=x\mip{y}{x}_\Bs$, for all $x,y\in\Ks$.  
\end{definition}

The compatibility condition requested above on the inner products assures that the induced left and right norms on the Hilbert C*-module $\Ks$ coincide. 

\begin{remark}
In our definitions the two auxiliary $\As^\alpha$-valued Hilbert C*-module inner products 
\begin{gather*}
\mip{x}{y}^{J_\As}_{\As^\alpha}:=\mip{x}{J_\As(y)}_\As, \quad {}^{J_\As}\mip{x}{y}_{\As^\alpha}:=\mip{J_\As(x)}{y}_\As, 
\end{gather*}
can be used in place of each other since they are related by the isomorphism $\alpha$ of the C*-algebra $\As^\alpha$: 
$\alpha(\mip{J_\As(x)}{y}_\As)=\mip{x}{J_\As(y)}_\As$. 
\xqed{\lrcorner} 
\end{remark}

\begin{remark}
Note that $\Ks_\Bs$ becomes naturally a Kre\u\i n C*-module over the C*-algebra $\Bs_+$ when equipped with the even part of the original inner product  i.e.~taking $\frac{1}{2} \mip{x}{y}_\Bs+\frac{1}{2}\mip{J(x)}{J(y)}_\Bs$ as inner product on $\Ks$; however with this new inner product the submodules  $\Ks_+$ and $\Ks_-$ are orthogonal (as in our original definition of Kre\u\i n C*-module over a C*-algebra) contrary to the general situation here, where the obstruction to the orthogonality is measured by the odd part of the original inner product 
$\frac{1}{2}\mip{J(x)}{y}_\Bs+\frac{1}{2}\mip{x}{J(y)}_\Bs$ that is always a non-degenerate anti-Hermitian product on $\Ks$ with values in 
$\Bs_-$. Furthermore, although in some situations (for example in finite dimensional cases) the product topology induced by the decomposition 
$\Ks=\Ks_+\oplus\Ks_-$ (that is actually the topology induced by the even part of the inner product, since the two inner products coincide, modulo sign, on their restriction to $\Ks_+$ and $\Ks_-$ respectively) is the same as the original topology of $\Ks$, we still suspect that in general this might fail. 

A significant difference from the case of Kre\u\i n C*-modules over C*-algebras is that the fundamental symmetry $J_\Bs$ is in general not self-adjoint (and in general not even adjointable, as can be seen in the case of example~\ref{ex: alg} below, where adjointability happens only in the case of 
$\alpha$ equal to the identity i.e.~when $\As$ is already a C*-algebra) with respect to the original Kre\u\i n inner product as suggested by the general lack of orthogonality between the even and odd submodules $\Ks_+$ and $\Ks_-$. 
\xqed{\lrcorner} 
\end{remark}

\begin{example}
Every Kre\u\i n C*-module $\Ks$ over a C*-algebra $\As$, as defined in the previous section, is a special case of our new definition as results by taking 
$\alpha$ to be the identity isomorphism of $\As$. Actually, whenever the Kre\u\i n C*-algebra $\As$ is a C*-algebra and $\alpha$ is trivial, we reduce to the definition of the last section: the submodules $\Ks_+$ and $\Ks_-$ are orthogonal and the fundamental symmetry $J_\As$ is Hermitian. The new definition of Kre\u\i n module over a Kre\u\i n C*-algebra even allows for a possible choice of a nontrivial $\alpha$ also in the case of a C*-algebra $\As$ and in this situation we obtain a Kre\u\i n C*-module over a C*-algebra where the two submodules $\Ks_+$ and $\Ks_-$ fail to be orthogonal and $J_\As$ is not Hermitian.  
\xqed{\lrcorner} 
\end{example}

\begin{example}
Let $\Ks_\As$ be a right Kre\u\i n C*-module over the C*-algebra $\As$ and let $\Bs(\Ks_\As)$ be the Krein C*-algebra of adjointable operators on $\Ks$, 
then ${}_{\Bs(\Ks)}\Ks$ is a left Kre\u\i n C*-module over the Kre\u\i n C*-algebra $\Bs(\Ks_\As)$ with left inner product defined by 
${}_{\Bs(\Ks)}\mip{x}{y}:=\Theta_{x,y}$, where $\Theta_{x,y}(z):=x\cdot \mip{y}{z}_\As$, for all $x,y,z\in \Ks$. 

The bimodule ${}_{\Bs(\Ks)}\Ks_\As$ is actually a Kre\u\i n C*-bimodule such that ${}_{\Bs(\K)}\mip{x}{y}z=x\mip{y}{z}_\As$. for all $x,y,z\in\Ks$. 
\xqed{\lrcorner} 
\end{example}

\begin{example}\label{ex: alg}
Let $\As$ be a Kre\u\i n C*-algebra. Then ${}_\As\As$ and $\As_\As$ are both Kre\u\i n C*-modules with inner products given by 
$\mip{x}{y}_\As:=x^*y$ and ${}_\As\mip{x}{y}:=xy^*$, furthermore ${}_\As\As_\As$ is a Kre\u\i n C*-bimodule.
\xqed{\lrcorner} 
\end{example}

\begin{example}
Let $K_{1}$ and $K_{2}$ be two Kre\u\i n spaces. 

The space ${}_{\B(K_2)}\B(K_1,K_2)_{\B(K_1)}$ of linear continuous maps between them is a Kre\u\i n C*-bimodule with the left/right actions given by the usual compositions of linear operators and inner products given respectively by 
$\mip{T}{S}_{\Bs(K_1)}:=T^*\circ S$ and ${}_{\Bs(K_2)}\mip{T}{S}:=T\circ S^*$. 
\xqed{\lrcorner} 
\end{example}

\begin{example}
Following the definitions provided in~\cite{BR}, let $A,B\in \Ob_{\Af}$ be two objects in a Kre\u\i n \hbox{C*-category} $\Af$. 
Then $\Af_{AB}:=\Hom_{\Af}(B,A)$ is a Kre\u\i n \hbox{C*-bimodule} over the Kre\u\i n C*-algebras $\Af_{AA}$ on the left and $\Af_{BB}$ on the right.
\xqed{\lrcorner} 
\end{example}

\begin{example}
Let $\MM$ be Minkowski space (or more generally any real vector space equipped with semi-definite inner product); 
let $\Lambda^{\CC}(\MM)$ denote the space of complex-valued antisymmetric forms on $\MM$ (the complexified Grassmann algebra of $\MM$) and let $\CCl(\MM)$ denote the complexified Clifford algebra of $\MM$. 

Note that for every fundamental decomposition of $\MM=\MM_+\oplus\MM_-$, we have for the Grassmann algebras the decomposition 
$\Lambda^{\CC}(\MM):=\Lambda^{\CC}(\MM_+)\hat{\otimes}\Lambda^{\CC}(\MM_-)$ and similarly, for the Clifford algebras, 
$\CCl(M)=\CCl(\MM_+)\hat{\otimes}\CCl(\MM_-)$, where (if we work in the category of associative algebras) $\hat{\otimes}$ denotes the $\ZZ_2$-graded tensor product. 

Note that the (underlying complex vector space of the) Grassmann algebra $\Lambda^\CC(\MM)$ is naturally a Kre\u\i n space with the semi-definite inner product induced by universal factorization property via 
$\mip{\omega_1\wedge\cdots\wedge\omega_n}{\phi_1\wedge\cdots\phi_n}:=\det[\mip{\omega_i}{\phi_j}]$ on each of the summands in  
$\oplus_{q=0}^{\dim\MM}\Lambda^\CC_q(\MM)= \Lambda^\CC(\MM)$, where $\mip{\omega_i}{\phi_j}\in \CC$ denotes the Minkowski inner product on complexified covectors $\omega_i,\phi_j\in \Lambda^\CC_1(\MM)$.   

The Clifford algebra $\CCl(\MM)$ has a natural structure of Ke\u\i n C*-algebra as a sub-algebra of the Kre\u\i n C*-algebra $\B(\Lambda^\CC(\MM))$: 
every fundamental symmetry $J_\MM$ of Minkowski space lifts to an involutive automorphims of $\CCl(\MM)$ (by the defining universal property of the Clifford algebra) that, under the linear isomorphism $\CCl(\MM)\simeq\Lambda^\CC(\MM)$, coincides with the (second quantized) fundamental symmetry $J_{\Lambda^\CC(\MM)}:=\oplus^{\infty}_{q=0}J_\MM^{\wedge q}$ of the Kre\u\i n space $\Lambda^\CC(\MM)$. 
It follows that the Kre\u\i n space $\Lambda^\CC(\MM)$ is a left Kre\u\i n module over the Kre\u\i n C*-algebra $\CCl(\MM)$. 

The (underlying vector space of the) Grassmann algebra $\Lambda^\CC(\MM)$ also becomes a Kre\u\i n C*-bimodule over $\CCl(\MM)$ via Clifford left and right actions and with the inner products induced via the linear isomorphism $\Lambda^\CC(\MM)\simeq\CCl(\MM)$ by the standard Kre\u\i n 
C*-bimodule structure of the Kre\u\i n C*-algebra $\CCl(\MM)$ over itself. 

As described in more detail in H.Baum~\cite{Ba} (see also A.Strohmaier~\cite[section~5.1]{Str} 
and K.Van Den Dungen-M.Paschke-A.Rennie~\cite[section~3.3.1]{DPR}) the module $S(\MM)$ of (Dirac) spinors is a Kre\u\i n space, whose fundamental symmetries are proportional to the product of the operators of Clifford multiplication by all the vectors in an orthonormal basis for the timelike summand of a fundamental decomposition $\MM=\MM_+\oplus \MM_-$.\footnote{On the usual Minkowski space $\MM^4$, the Kre\u\i n space 
$S(\MM^4)$ has signature $(2,2)$ and the fundamental symmetries are just the Dirac $\gamma^0$ operators.} 
The space $S(\MM)$ (for $\MM$ even-dimensional) becomes a left Kre\u\i n \hbox{C*-module} over the Ke\u\i n C*-algebra $\CCl(\MM)$ with the inner product induced by the linear isomorphisms $S(\MM)\otimes S(\MM)^*\simeq \Lambda^\CC(\MM)\simeq\CCl(\MM)$ and ${}_{\CCl(\MM)}S(\MM)_\CC$ is a Kre\u\i n C*-bimodule that, with the terminology introduced in remark~\ref{rem: morita}, is an example of Morita-Kre\u\i n equivalence C*-bimodule. 
\xqed{\lrcorner} 
\end{example}

\begin{example}
Let $M$ be a (compact) semi-Riemannian space-orientable and time-orientable manifold. As already described in example~\ref{ex: man}, the module $\Gamma(T(M))$ of its continuous vector fields is a (unital) Kre\u\i n C*-bimodule over the (unital) C*-algebra $C(M)$. 
The algebra $\Gamma(\CCl (M))$ of continuous section of the complexified Clifford bundle $\CCl(T(M))$ of $M$ is a (unital) Kre\u\i n C*-algebra and the module $\Gamma(\Lambda^{\CC}(M))$ of continuous sections of the complexified Grassmann bundle $\Lambda^\CC(M)$ of $M$ is a (unital) Kre\u\i n C*-bimodule over the Kre\u\i n C*-algebra $\Gamma(\CCl(M))$. The case of spinorial manifolds is described in example~\ref{ex: spin}. 
\xqed{\lrcorner} 
\end{example}

\begin{remark}
Note that, in the previous example, if the manifold $M$ is not time-orientable and space-orientable, the algebra $\Gamma(\CCl(M))$ (although being a nice involutive complete topological algebra) does not admit a globally defined fundamental symmetry and so does not fit into the current definition of Kre\u\i n C*-algebra! 

This clearly indicates that the environment of Kre\u\i n C*-algebras and Kre\u\i n C*-modules that we have developed here is insufficient to deal with a general axiomatization of ``complete semi-definite C*-algebras and C*-modules over them''.  
\xqed{\lrcorner} 
\end{remark}

\medskip 

We pass now to briefly examine the main properties of the algebras of adjointable operators on Kre\u\i n C*-modules over Kre\u\i n C*-algebras. 

\begin{definition}
Let $\Ks_\As$ be a Kre\u\i n C*-module over the Kre\u\i n C*-algebra $\As$. A map $T:\Ks\to\Ks$ is said to be \emph{adjointable} if there exists another map $T^*:\Ks\to\Ks$ such that $\mip{T(x)}{y}_\As=\mip{x}{T^*(y)}_\As$, for all $x,y\in \Ks$. The family of such adjointable maps is denoted by 
$\Bs(\Ks_\As)$.  
\end{definition}

\begin{remark}
As usual, the adjointable maps are already $\As$-linear and continuous and the adjoint $T^*$ is unique. The set $\Bs(\Ks_\As)$ is a vector space and an associative unital algebra under composition, furthermore the map $*:T\mapsto T^*$ is involutive, antimultiplicative and conjugate $\CC$-linear so that  $\Bs(\Ks_\As)$ is a complex associative unital $*$-algebra. 
\xqed{\lrcorner} 
\end{remark}

\begin{proposition}
A map $T:\Ks_\As\to\Ks_\As$ is adjointable with respect to the inner product $\mip{\cdot}{\cdot}_\As$ if and only if the map $T$ is adjointable for the Hilbert C*-module $\Ks_{\As^\alpha}$ with the auxiliary inner product $\mip{\cdot}{\cdot}^{J_\As}_{\As^\alpha}$. As a consequence, the associative unital algebra $\Bs(\Ks_\As)$ coincides with the associative unital algebra $\Bs(\Ks_{\As^\alpha})$. The relation between the adjoint $T^*$ of $T$ in 
$\Bs(\Ks_\As)$ and the adjoint $T^{\dag_\alpha}$ of $T$ in the C*-algebra $\Bs(\Ks_{\As^\alpha})$ is given by: 
\begin{equation*}
T^{\dag_\alpha}= J_\As \circ T^* \circ J_\As, \quad T^*=J_\As\circ T^{\dag_\alpha}\circ J_\As
\end{equation*}
\end{proposition}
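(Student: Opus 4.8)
The plan is to mimic, in this more general setting, the passages already carried out in Proposition~\ref{proposition: a-k} for Kre\u\i n C*-modules over C*-algebras, with the only real change being the bookkeeping of the automorphism $\alpha$ on $\As$. First I would suppose $T\in\Bs(\Ks_\As)$, i.e.~$T$ admits an adjoint $T^*$ with respect to $\mip{\cdot}{\cdot}_\As$, so that $\mip{T(x)}{y}_\As=\mip{x}{T^*(y)}_\As$ for all $x,y\in\Ks$. Substituting $y$ by $J_\As(y)$ and recalling the definition $\mip{x}{y}^{J_\As}_{\As^\alpha}:=\mip{x}{J_\As(y)}_\As$, one gets
\begin{equation*}
\mip{T(x)}{y}^{J_\As}_{\As^\alpha}=\mip{T(x)}{J_\As(y)}_\As=\mip{x}{T^*(J_\As(y))}_\As .
\end{equation*}
Now I want the right-hand side to read $\mip{x}{S(y)}^{J_\As}_{\As^\alpha}=\mip{x}{J_\As(S(y))}_\As$ for some $S$; comparing, it suffices to take $S:=J_\As\circ T^*\circ J_\As$, which is a well-defined map $\Ks\to\Ks$ since $J_\As$ is a bijection with $J_\As\circ J_\As=\id_\Ks$. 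This shows $T$ is adjointable in the Hilbert C*-module $\Ks_{\As^\alpha}$ with adjoint $T^{\dag_\alpha}=J_\As\circ T^*\circ J_\As$, giving one of the claimed formulas; the other, $T^*=J_\As\circ T^{\dag_\alpha}\circ J_\As$, follows by conjugating with $J_\As$ (using $J_\As\circ J_\As=\id_\Ks$).

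For the converse, I would run exactly the same computation in reverse: if $T$ is adjointable for $\mip{\cdot}{\cdot}^{J_\As}_{\As^\alpha}$ with adjoint $T^{\dag_\alpha}$, then for all $x,y\in\Ks$
\begin{equation*}
\mip{T(x)}{y}_\As=\mip{T(x)}{J_\As(J_\As(y))}_\As=\mip{T(x)}{J_\As(y)}^{J_\As}_{\As^\alpha}=\mip{x}{T^{\dag_\alpha}(J_\As(y))}^{J_\As}_{\As^\alpha}=\mip{x}{J_\As(T^{\dag_\alpha}(J_\As(y)))}_\As ,
\end{equation*}
so $T$ is adjointable with respect to $\mip{\cdot}{\cdot}_\As$ with adjoint $J_\As\circ T^{\dag_\alpha}\circ J_\As$. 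Hence the two notions of adjointability coincide and, since addition, scalar multiplication and composition are literally the same operations on the common underlying module $\Ks$, the sets $\Bs(\Ks_\As)$ and $\Bs(\Ks_{\As^\alpha})$ agree as associative unital algebras; the uniqueness of adjoints (noted in the remark preceding the statement, and following from non-degeneracy of the inner products) guarantees the adjoint operators are exactly the ones exhibited by the two displayed formulas.

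The one point that deserves a word of care — and the closest thing to an obstacle — is making sure the auxiliary pairing $\mip{\cdot}{\cdot}^{J_\As}_{\As^\alpha}$ is being used consistently: it is $\As^\alpha$-valued and non-degenerate (this is part of the definition of Kre\u\i n C*-module over a Kre\u\i n C*-algebra, which asserts precisely that $(x,y)\mapsto\mip{x}{J_\As(y)}_\As$ makes $\Ks$ a Hilbert C*-module over $\As^\alpha$), so the adjoint $T^{\dag_\alpha}$ really is unique and $\As^\alpha$-linear, and in particular one does not need to verify $\As$-linearity of $T$ separately — it is automatic. I would also remark, for the reader, that the formula $T^{\dag_\alpha}=J_\As\circ T^*\circ J_\As$ is exactly the module-level shadow of the relation $x^{\dagger_\alpha}=\alpha(x^*)$ between the two involutions on $\As$, since $\alpha(\mip{J_\As(x)}{y}_\As)=\mip{x}{J_\As(y)}_\As$; but this observation is not needed for the proof and can be omitted. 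No further ingredients are required beyond the definitions already in place.
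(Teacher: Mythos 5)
Your argument is correct and is essentially the paper's own proof: the same substitution $y\mapsto J_\As(y)$ combined with $J_\As\circ J_\As=\id_\Ks$ yields the forward implication and the formula $T^{\dag_\alpha}=J_\As\circ T^*\circ J_\As$, and the reverse direction is the identical computation run backwards. The extra remarks on non-degeneracy and uniqueness of adjoints are consistent with the paper's preceding remark and add nothing that changes the route.
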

\begin{proof}
Suppose that $\mip{T(x)}{y}_\As=\mip{y}{T^*(y)}_\As$. The following calculation
\begin{align*}
\mip{T(x)}{y}^{J_\As}_{\As^\alpha}&=\mip{T(x)}{J_\As(y)}_\As=
\mip{x}{T^*J_\As(y)}_\As\\
&=\mip{x}{J_\As J_\As T^*J_\As(y)}_\As=\mip{x}{J_\As T^*J_\As(y)}^{J_\As}_{\As^\alpha},
\end{align*} 
assures that the adjointability in $\Bs(\Ks_\As)$ implies the adjointability in $\Bs(\Ks_{\As^\alpha})$ and the first formula relating the adjoints. 

Suppose now that 
$\mip{T(x)}{y}^{J_\As}_{\As^\alpha}=\mip{x}{T^{\dag_\alpha}(y)}^{J_\As}_{\As^\alpha}$ 
i.e.~$\mip{T(x)}{J_\As(y)}_{\As}=\mip{x}{J_\As T^{\dag_\alpha}(y)}_{\As}$ and choosing $y=J_\As(y')$ for an arbitrary $y'\in \Ks$, we obtain 
$\mip{T(x)}{(y')}_{\As}=\mip{x}{J_\As T^{\dag_\alpha}J_\As(y')}_{\As}$ that assures the reverse and the second adjointability formula. 
\end{proof}

Although we know that in general $J_\As$ is not an adjointable operator, we still have the following fundamental symmetry of $\Bs(\Ks_\As)$: 
\begin{proposition}
If $T$ is adjointable in $\Bs(\Ks_\As)$, also the new operator $J_\As\circ T \circ J_\As$ is adjointable in $\Bs(\Ks_\As)$ and the map 
\begin{equation*}
\alpha_{J_\As}: T\mapsto J_\As\circ T \circ J_\As
\end{equation*}
is a $*$-isomorphism of the involutive algebra $\Bs(\Ks_\As)$ of adjointable operators. 
\end{proposition}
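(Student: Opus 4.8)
Write $J:=J_\As$ throughout. The plan is to verify directly that $\alpha_{J_\As}$ maps $\Bs(\Ks_\As)$ into itself — i.e.\ that $J\circ T\circ J$ is adjointable whenever $T$ is — and then to check the algebraic properties (linearity, multiplicativity, unitality, compatibility with $*$, and $\alpha_{J_\As}\circ\alpha_{J_\As}=\id$) one by one. First I would record the three facts used repeatedly: $J\circ J=\id_\Ks$; $J$ is additive and $\CC$-linear, the latter because $J(a\cdot x)=\alpha(a)J(x)$ and the fundamental symmetry $\alpha$ fixes the scalars $\CC\cdot 1_\As\subset\As$; and the covariance identity $\alpha\bigl(\mip{u}{v}_\As\bigr)=\mip{J(u)}{J(v)}_\As$ for all $u,v\in\Ks$, which is part of the definition of a Kre\u\i n C*-module over a Kre\u\i n C*-algebra.

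For the adjointability, let $T\in\Bs(\Ks_\As)$ with adjoint $T^*$. For $x,y\in\Ks$, writing $y=J(J(y))$ and applying covariance, then the adjoint relation for $T$, then covariance once more, gives
\[
\mip{(JTJ)(x)}{y}_\As=\mip{(JTJ)(x)}{J(J(y))}_\As=\alpha\bigl(\mip{TJ(x)}{J(y)}_\As\bigr)=\alpha\bigl(\mip{J(x)}{T^*J(y)}_\As\bigr)=\mip{x}{(JT^*J)(y)}_\As .
\]
Hence $JTJ$ is adjointable, so $\alpha_{J_\As}(T)\in\Bs(\Ks_\As)$, and moreover $(JTJ)^*=JT^*J$, i.e.\ $\alpha_{J_\As}(T)^*=\alpha_{J_\As}(T^*)$; thus $\alpha_{J_\As}$ commutes with the involution. (Automatic $\As$-linearity and continuity of adjointable maps then apply to $JTJ$ as well.)

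It remains to check that $\alpha_{J_\As}$ is a unital algebra isomorphism. Linearity is immediate from additivity and $\CC$-linearity of $J$. For multiplicativity, $\alpha_{J_\As}(ST)=JSTJ=(JSJ)(JTJ)=\alpha_{J_\As}(S)\,\alpha_{J_\As}(T)$, inserting $J\circ J=\id_\Ks$ in the middle; likewise $\alpha_{J_\As}(\id_\Ks)=J\circ J=\id_\Ks$. Finally $\alpha_{J_\As}\bigl(\alpha_{J_\As}(T)\bigr)=J(JTJ)J=(J\circ J)\circ T\circ(J\circ J)=T$, so $\alpha_{J_\As}$ is its own inverse and in particular bijective; combined with the previous paragraph this shows it is a $*$-automorphism of the involutive algebra $\Bs(\Ks_\As)$.

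The only point requiring care — and the reason this is isolated as a proposition — is that $J$ need not itself be adjointable (as the preceding remark stresses), so one cannot simply assert that $\alpha_{J_\As}$ is conjugation by a unitary, as in the C*-algebra case; the adjointability of $JTJ$ genuinely has to be extracted from the covariance identity, which is exactly the displayed computation. An alternative, if one prefers, is to pass through the previous proposition: since $\Bs(\Ks_\As)=\Bs(\Ks_{\As^\alpha})$ and $T^{\dag_\alpha}=J\circ T^*\circ J$, one can transfer the statement to the Hilbert C*-module side; but the direct argument above is shorter.
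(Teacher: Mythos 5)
Your proof is correct, and it takes a more direct route than the paper's. You establish the adjointability of $J\circ T\circ J$ and the identity $(JTJ)^*=JT^*J$ by a single inner-product computation that uses only the covariance axiom $\alpha\bigl(\mip{u}{v}_\As\bigr)=\mip{J(u)}{J(v)}_\As$ together with $J\circ J=\id_\Ks$; the paper instead works entirely at the level of the adjoint-transfer formulas $T^{\dag_\alpha}=J\circ T^*\circ J$ and $T^*=J\circ T^{\dag_\alpha}\circ J$ from the preceding proposition, applying the involutivity of $*$ and of $\dag_\alpha$ to deduce first that $(J T^{\dag_\alpha} J)^{\dag_\alpha}=JTJ$ (hence adjointability, and the fact that $\alpha_{J_\As}$ is a $\dag_\alpha$-isomorphism of the C*-algebra $\Bs(\Ks_{\As^\alpha})$) and then the analogous identity for $*$. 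The two arguments carry the same content — your computation is essentially the proof of the transfer formulas unwound — but yours is self-contained and makes visible exactly where the failure of adjointability of $J$ itself is circumvented, while the paper's version additionally records the useful fact that $\alpha_{J_\As}$ is simultaneously a $\dag_\alpha$-automorphism of the auxiliary C*-algebra, which is what the subsequent theorem (that $\Bs(\Ks_\As)$ is a Kre\u\i n C*-algebra) actually needs. Your explicit verification of multiplicativity, unitality and $\alpha_{J_\As}\circ\alpha_{J_\As}=\id$ fills in steps the paper leaves implicit; the one assertion worth flagging is the $\CC$-linearity of $J$, which you correctly justify from $J(\lambda 1_\As\cdot x)=\alpha(\lambda 1_\As)\cdot J(x)=\lambda J(x)$, using that the algebra automorphism $\alpha$ fixes $\CC\cdot 1_\As$.
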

\begin{proof}
Since, for all $T\in \Bs(\Ks_\As)$, $(T^*)^*=T$, we obtain \hbox{$J_\As(J_\As T^{\dag_\alpha}J_\As)^{\dag_\alpha}J_\As=T$} or equivalently 
$(J_\As T^{\dag_\alpha}J_\As)^{\dag_\alpha}=J_\As T J_\As$. 
Since $(S^{\dag_\alpha})^{\dag_\alpha}=S$, we get 
\hbox{$(J_\As T^{\dag_\alpha}J_\As)=(J_\As T J_\As)^{\dag_\alpha}$} i.e.~$\alpha_{J_\As}$ is a \hbox{$\dag_\alpha$-iso}\-mor\-phism of the 
C*-algebra $\Bs(\Ks_{\As^\alpha})$. 

Similarly from $(S^{\dag_\alpha})^{\dag_\alpha}=S$, we get $J_\As(J_\As S^* J_\As)^*J_\As=S$ or equivalently $(J_\As S^* J_\As)^*=J_\As S J_\As$ and hence $J_\As S^* J_\As=(J_\As S J_\As)^*$ i.e.~$\alpha_{J_\As}$ is a $*$-iso\-mor\-phism of the involutive algebra $\Bs(\Ks_\As)$. 
\end{proof}

\begin{theorem}
The algebra $\Bs(\Ks_\As)$ of adjointable operators of a Kre\u\i n \hbox{C*-mod}\-ule over a Kre\u\i n C*-algebra $\As$ is a Kre\u\i n C*-algebra.  
\end{theorem}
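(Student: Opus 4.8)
The plan is to transport the C*-structure already available on the associated Hilbert C*-module into the definition of a Kre\u\i n C*-algebra, using the two propositions just established. By the proposition identifying adjointable operators, $\Bs(\Ks_\As)$ coincides, as a unital associative $*$-algebra with involution the Kre\u\i n adjoint $T\mapsto T^*$, with the C*-algebra $\Bs(\Ks_{\As^\alpha})$ of adjointable operators of the auxiliary Hilbert C*-module $\Ks_{\As^\alpha}$ over the C*-algebra $\As^\alpha$; the latter is complete and carries the operator norm, call it $\|\cdot\|_\alpha$, and the C*-involution $T\mapsto T^{\dag_\alpha}$. First I would take $\|\cdot\|_\alpha$ as the norm generating the topology on $\Bs(\Ks_\As)$. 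With respect to it multiplication is continuous, since it is a submultiplicative Banach algebra norm, and the Kre\u\i n involution is continuous as well, because $T^*=J_\As\circ T^{\dag_\alpha}\circ J_\As=\alpha_{J_\As}(T^{\dag_\alpha})$, where $\dag_\alpha$ is a C*-isometry and, by the proposition on $\alpha_{J_\As}$, the map $\alpha_{J_\As}$ is a $\dag_\alpha$-isomorphism of $\Bs(\Ks_{\As^\alpha})$ hence isometric. Thus $\Bs(\Ks_\As)$ is an involutive complete complex topological algebra.

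Second, I would exhibit the fundamental symmetry. The natural candidate is $\alpha_{J_\As}\colon T\mapsto J_\As\circ T\circ J_\As$. By the preceding proposition this is a $*$-automorphism of the involutive algebra $\Bs(\Ks_\As)$, and it is involutive because $J_\As\circ J_\As=\id_\Ks$ forces $\alpha_{J_\As}\circ\alpha_{J_\As}=\id_{\Bs(\Ks_\As)}$.

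Finally, I would check the required C*-type identity for this fundamental symmetry and the Banach algebra norm $\|\cdot\|_\alpha$. Using the relation $T^{\dag_\alpha}=J_\As\circ T^*\circ J_\As$ and the C*-identity of $\Bs(\Ks_{\As^\alpha})$, one has
\[
\|\alpha_{J_\As}(T^*)\circ T\|_\alpha=\|(J_\As\circ T^*\circ J_\As)\circ T\|_\alpha=\|T^{\dag_\alpha}\circ T\|_\alpha=\|T\|_\alpha^2
\]
for all $T\in\Bs(\Ks_\As)$, which is exactly the axiom in the definition of Kre\u\i n C*-algebra. This completes the verification, in complete parallel to the earlier theorem for Kre\u\i n C*-modules over C*-algebras.

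The only subtlety — more a point of care than a genuine obstacle — is that $J_\As$ is in general not an adjointable operator on $\Ks_\As$, so $\alpha_{J_\As}$ cannot be presented as inner conjugation by an element of $\Bs(\Ks_\As)$; its status as an involutive $*$-automorphism relies entirely on the previously proved proposition, which I would invoke rather than re-derive. Everything else is a direct repackaging of the C*-structure of $\Bs(\Ks_{\As^\alpha})$.
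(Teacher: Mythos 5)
Your proposal is correct and follows essentially the same route as the paper: identify $\Bs(\Ks_\As)$ with the C*-algebra $\Bs(\Ks_{\As^\alpha})$ via the adjointability proposition, take its operator norm, use $\alpha_{J_\As}$ as the fundamental symmetry, and verify $\|\alpha_{J_\As}(T^*)\circ T\|_\alpha=\|T^{\dag_\alpha}\circ T\|_\alpha=\|T\|_\alpha^2$. The paper's proof is just a one-line version of this same computation, so your additional remarks on completeness, continuity of the operations, and the non-adjointability of $J_\As$ are accurate elaborations rather than deviations.
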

\begin{proof}
The $*$-isomorphism $\alpha_{J_\As}$ of the involutive algebra $\Bs(\Ks_\As)$, defined in the previous proposition, satisfies the C*-property 
$\|\alpha_{J_\As}(T^*)T\|_\alpha=\|T\|^2_\alpha$ with respect to the norm of the C*-algebra $\Bs(\Ks_{\As^\alpha})$. 
\end{proof}

\section{Categories of Kre\u\i n C*-modules}\label{sec: cat}

The following proposition, whose proof is self-evident, provides the most elementary category of morphisms of Kre\u\i n C*-algebras that naturally contains, as a full subcategory, the category of unital $*$-homomorphims of unital C*-algebras. 
\begin{proposition}
There is a category $\Af$ whose objects are unital Kre\u\i n \hbox{C*-al}\-gebras $\As,\Bs,\dots$; whose arrows $\phi:\As\to\Bs$ are unital Kre\u\i n 
$*$-homomorphisms i.e.~unital $*$-homomorphisms of involutive unital algebras $\phi:\As\to\Bs$ such that there exist at least a fundamental  symmetry of $\alpha$ of $\As$ and $\beta$ of $\Bs$ such that $\phi\circ\alpha=\beta\circ \phi$; and composition is the usual composition of functions.  
\end{proposition}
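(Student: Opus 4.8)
The plan is to verify the category axioms directly. Two of them are automatic: arrows are set-theoretic functions and composition is ordinary composition of functions, so associativity of composition, and the fact that $\id_\As$ is a two-sided unit for composition, come for free. Thus the only things to check are that $\id_\As$ qualifies as an arrow for every object $\As$, and that the composite of two arrows is again an arrow.

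That $\id_\As$ is an arrow is immediate: it is a unital $*$-homomorphism of the involutive unital algebra $\As$, and since $\As$ is a Kre\u\i n C*-algebra it admits, by definition, at least one fundamental symmetry $\alpha$; then $\id_\As\circ\alpha=\alpha\circ\id_\As$ holds tautologically, so the compatibility condition is satisfied. For closure under composition, let $\phi:\As\to\Bs$ and $\psi:\Bs\to\Cs$ be arrows. First, $\psi\circ\phi$ is again a unital $*$-homomorphism, since composites of unital homomorphisms of involutive algebras are such ($(\psi\circ\phi)(x^*)=\psi(\phi(x)^*)=((\psi\circ\phi)(x))^*$). Second, one must produce a fundamental symmetry of $\As$ and one of $\Cs$ intertwined by $\psi\circ\phi$: choosing fundamental symmetries $\alpha$ of $\As$, $\beta$ of $\Bs$ with $\phi\circ\alpha=\beta\circ\phi$, and $\beta'$ of $\Bs$, $\gamma$ of $\Cs$ with $\psi\circ\beta'=\gamma\circ\psi$, then as soon as the two intermediate symmetries can be taken to coincide ($\beta=\beta'$) we get
\[
(\psi\circ\phi)\circ\alpha=\psi\circ\beta\circ\phi=\psi\circ\beta'\circ\phi=\gamma\circ\psi\circ\phi=\gamma\circ(\psi\circ\phi),
\]
so $\psi\circ\phi$ is compatible with $(\alpha,\gamma)$ and is an arrow. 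Restricted to unital C*-algebras, where the relevant fundamental symmetry on each object is the identity, one has $\beta=\beta'=\id_\Bs$ automatically and every unital $*$-homomorphism is an arrow; this is the sense in which the statement is ``self-evident'', and it also yields the asserted full embedding of the category of unital C*-algebras into $\Af$.

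The one point that needs genuine attention in the general case is precisely the alignment of the two intermediate fundamental symmetries $\beta,\beta'$ of $\Bs$: a Kre\u\i n C*-algebra usually carries many fundamental symmetries, and distinct ones need not commute, so a priori the symmetry witnessing the compatibility of $\phi$ and that witnessing the compatibility of $\psi$ are unrelated. I expect this matching to be the main obstacle. The way I would address it is to note that only the behaviour of $\beta$ on the subalgebra $\phi(\As)$ is constrained by $\phi\circ\alpha=\beta\circ\phi$, so the problem reduces to exhibiting a single fundamental symmetry of $\Bs$ that restricts correctly on $\phi(\As)$ and is at the same time intertwined by $\psi$ with some fundamental symmetry of $\Cs$; I would look for the slack needed for this in the non-uniqueness of fundamental decompositions, adjusting $\gamma$ accordingly, so that in the end $\psi\circ\phi$ satisfies the required condition with a suitably chosen pair.
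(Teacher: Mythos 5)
Your proposal is right that the only nontrivial category axiom here is closure of the arrow class under composition, and you have correctly located the crux: the fundamental symmetry $\beta$ of $\Bs$ witnessing the compatibility of $\phi$ and the fundamental symmetry $\beta'$ of $\Bs$ witnessing the compatibility of $\psi$ are a priori unrelated, and the chain $(\psi\circ\phi)\circ\alpha=\psi\circ\beta\circ\phi=\gamma\circ\psi\circ\phi$ only goes through when $\beta=\beta'$. The paper offers no help on this point: it declares the proposition ``self-evident'' and gives no proof at all, so there is no argument there to compare yours against. The trivial parts of your write-up (associativity, $\id_\As$ being an arrow, the composite being a unital $*$-homomorphism, and the full embedding of unital C*-algebras via the identity symmetries) are all fine.

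The genuine gap is that your final paragraph is a plan, not an argument. You say you would ``look for the slack in the non-uniqueness of fundamental decompositions, adjusting $\gamma$ accordingly,'' but nothing in the definitions supplies that slack: a fundamental symmetry of a Kre\u\i n C*-algebra is an involutive automorphism admitting a compatible C*-norm, and two distinct fundamental symmetries $\beta,\beta'$ of $\Bs$ need not be conjugate, need not commute, and need not restrict to the same automorphism of the subalgebra $\phi(\As)$. So from $\psi\circ\beta'=\gamma\circ\psi$ one cannot conclude anything about $\psi\circ\beta$; to express $\psi\circ\beta$ as $\gamma''\circ\psi$ one would need to know that $\psi$ intertwines \emph{every} fundamental symmetry of $\Bs$ with some fundamental symmetry of $\Cs$, which is not part of the definition of an arrow and is not obviously true (consider, e.g., that for surjective $\psi$ this forces $\ker\psi$ to be invariant under $\beta$ and forces the induced automorphism of $\Cs$ to admit a compatible C*-norm). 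As it stands your proof establishes closure under composition only in the special case $\beta=\beta'$; to finish you must either prove the stronger intertwining property just described, or observe that the proposition should really be read with arrows carrying chosen compatible pairs of symmetries (equivalently, restrict composition to composable witnessed pairs), and say explicitly which repair you are making.
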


We now define the Kre\u\i n C*-analogue of the well-known notion of C*-cor\-re\-spon\-dence. 
\begin{definition}
A \emph{left Kre\u\i n C*-correspondence} from the unital Kre\u\i n \hbox{C*-al}\-ge\-bra $\Bs$ to unital the Kre\u\i n C*-algebra $\As$ is unital left 
Kre\u\i n C*-module ${}_\As\Ms$ over the Kre\u\i n C*-algebra $\As$ equipped with a morphism of unital Kre\u\i n \hbox{C*-algebras} from $\Bs$ to the unital Kre\u\i n C*-algebra $\Bs({}_\As\Ms)$ of adjointable operators on the Kre\u\i n module ${}_\As\Ms$ such that 
$x\cdot \beta(b)=J_\As(J_\As(x)\cdot b)$.  

A \emph{right Kre\u\i n C*-correspondence} from $\Bs$ to $\As$ is similarly defined as a unital right Kre\u\i n \hbox{C*-module} $\Ns_\Bs$ over $\Bs$ equipped with a morphism of unital Kre\u\i n C*-algebras from $\As$ to $\Bs(\Ns_\Bs)$ such that $\alpha(a)\cdot x=J_\Bs(a\cdot(J_\Bs x))$. 

A \emph{morphism of Kre\u\i n C*-correspondences} is a map $\Phi:{}_\As\Ms_\Bs\to{}_\As\Ns_\Bs$ between right (respectively left) Kre\u\i n 
C*-correspondences such that 
\begin{equation*}
\Phi(a\cdot x\cdot b)=a\cdot \Phi(x)\cdot b, \quad \forall a\in \As, \ b\in \Bs, \ x\in \Ms.
\end{equation*} 
\end{definition}

\begin{remark}
The previous definition entails that a left Kre\u\i n C*-correspondence is actually a unital bimodule ${}_\As\Ms_\Bs$ over the the unital Kre\u\i n C*-algebras $\As$ and $\Bs$ (with $\As$-valued inner product) such that there exists at least one fundamental symmetry $J$ of ${}_\As\Ms$ and  fundamental symmetries $\alpha$ of $\As$, $\beta$ of $\Bs$ that satisfy the compatibility condition $J(axb)=\alpha(a)J(x)\beta(b)$, for all $x\in \Ms$, $a\in \As$ and $b\in \Bs$. 

Clearly we have categories of morphisms of right (respectively) Kre\u\i n \hbox{C*-cor}\-re\-spon\-dences under the usual composition of morphisms. 
\xqed{\lrcorner} 
\end{remark}

The following definition and theorems incorporate (and generalize to the case of Kre\u\i n C*-modules over Kre\u\i n \hbox{C*-algebras}) the notion of tensor product of Kre\u\i n spaces and Kre\u\i n C*-modules over \hbox{C*-algebras} developed in R.Tanadkithirun's senior undergraduate 
project~\cite{T}. 
\begin{definition}
The \emph{internal tensor product of two right Kre\u\i n C*-cor\-re\-spon\-dences} ${}_\As\Ms_\Bs$ and ${}_\Bs\Ns_\Cs$ is defined as a left $\As$-linear right $\Cs$-linear and $\Bs$-balanced map $\otimes: {}_\As\Ms_\Bs \times {}_\Bs\Ns_\Cs\to {}_\As\Ts_\Cs$ with values into a right Kre\u\i n 
\hbox{C*-cor}\-respondence ${}_\As\Ts_\Bs$ from $\As$ to $\Bs$ such that the following universal factorization property is satisfied: 

for every left $\As$-linear right $\Cs$-linear and $\Bs$-balanced function\hbox{ $\phi:\Ms\times \Ns\to \Qs$} with values into a Kre\u\i n 
\hbox{C*-correspondence} ${}_\As\Qs_\Cs$ from $\As$ to $\Cs$, there exists a unique morphism $\Phi:\Ts\to\Qs$ of Kre\u\i n 
\hbox{C*-cor}\-respondences  such that $\Phi\circ \otimes=\phi$. 
\end{definition}

\begin{theorem}
Tensor products af right Kre\u\i n C*-correspondences exist and are unique up to isomorphism in the category of morphisms of Kre\u\i n C*-cor\-re\-spon\-dences. 
\end{theorem}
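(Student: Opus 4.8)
The plan is to construct the tensor product concretely as a quotient of an algebraic tensor product and then verify that the Kre\u\i n C*-correspondence structure can be built on it, deferring to the already-established properties of tensor products of Hilbert C*-modules. First I would fix fundamental symmetries $\alpha$ on $\As$, $\beta$ on $\Bs$, $\gamma$ on $\Cs$, together with the fundamental symmetries $J_\Ms$ on ${}_\As\Ms_\Bs$ and $J_\Ns$ on ${}_\Bs\Ns_\Cs$ compatible with them (as in the remark following the definition of Kre\u\i n C*-correspondence). Passing to the auxiliary inner products $\mip{\cdot}{\cdot}^{J}_{\Bs^\beta}$ and $\mip{\cdot}{\cdot}^{J}_{\Cs^\gamma}$ turns $\Ms$ into a Hilbert C*-correspondence ${}_{\As^\alpha}\Ms_{\Bs^\beta}$ and $\Ns$ into ${}_{\Bs^\beta}\Ns_{\Cs^\gamma}$, and the classical internal tensor product $\Ms\otimes_{\Bs^\beta}\Ns$ of Hilbert C*-correspondences exists: it is the Hausdorff completion of the algebraic tensor product $\Ms\odot_\Bs\Ns$ with respect to the semi-inner product determined by $\mip{x_1\otimes y_1}{x_2\otimes y_2}_{\Cs^\gamma} := \mip{y_1}{\mip{x_1}{x_2}^{J}_{\Bs^\beta}\cdot y_2}^{J}_{\Cs^\gamma}$, with left $\As^\alpha$-action $a\cdot(x\otimes y):=(a\cdot x)\otimes y$.

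Next I would equip $\Ts:=\Ms\otimes_{\Bs^\beta}\Ns$ with a candidate fundamental symmetry, namely the map $J_\Ts$ determined on elementary tensors by $J_\Ts(x\otimes y):=J_\Ms(x)\otimes J_\Ns(y)$. One must check this is well defined on the $\Bs$-balanced tensor product: since $J_\Ms(x\cdot b)=J_\Ms(x)\cdot\beta(b)$ and $J_\Ns(b\cdot y)=\beta(b)\cdot J_\Ns(y)$, the assignment respects the balancing relation $x b\otimes y = x\otimes b y$. It is then isometric for the auxiliary norm (using $\beta$ is an automorphism of $\Bs^\beta$ and the corresponding identity for $J_\Ms,J_\Ns$), so it extends to the completion; it squares to the identity, and it intertwines the left $\As$-action with $\alpha$ and the right $\Cs$-action with $\gamma$. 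Defining the genuine Kre\u\i n inner product on $\Ts$ by $\mip{u}{v}_\Cs:=\mip{u}{J_\Ts(v)}^{J}_{\Cs^\gamma}$ (inverting the relation of the remark) and similarly reconstructing the left $\As$-action compatibility, one recovers a Kre\u\i n C*-module structure whose associated Hilbert C*-module, with respect to $J_\Ts$, is precisely $\Ms\otimes_{\Bs^\beta}\Ns$, so its norm induces the topology; the representation of $\As$ by adjointable operators is the amplification $a\mapsto (a\cdot-)\otimes\id$, and one checks the correspondence compatibility $u\cdot\gamma(c)=J_\Ts(J_\Ts(u)\cdot c)$ on elementary tensors. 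This makes ${}_\As\Ts_\Cs$ a right Kre\u\i n C*-correspondence and $\otimes:\Ms\times\Ns\to\Ts$ the required bilinear $\Bs$-balanced map.

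For the universal factorization property I would take any left $\As$-linear, right $\Cs$-linear, $\Bs$-balanced $\phi:\Ms\times\Ns\to\Qs$ into a Kre\u\i n C*-correspondence ${}_\As\Qs_\Cs$, and push it through the analogous construction for $\Qs$: fixing a fundamental symmetry $J_\Qs$ compatible with $\alpha$ and $\gamma$, the map $\phi$ induces a bilinear $\Bs^\beta$-balanced map into the Hilbert C*-correspondence $\Qs_{\Cs^\gamma}$, which by the universal property of the Hilbert-module internal tensor product factors uniquely through a bounded adjointable $\As^\alpha$-$\Cs^\gamma$-bilinear map $\Phi:\Ts\to\Qs$. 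It remains to see $\Phi$ is a morphism of Kre\u\i n C*-correspondences, i.e.~it is $\As$-linear, $\Cs$-linear for the original actions (automatic, since those actions agree with the $\alpha,\gamma$-twisted ones as maps of sets) — and, crucially, that $\Phi\circ J_\Ts=J_\Qs\circ\Phi$, which forces $\Phi$ to respect the original Kre\u\i n inner products; this identity I would verify on elementary tensors from $J_\Ms,J_\Ns,J_\Qs$-compatibility of $\phi$ and then extend by continuity. Uniqueness is inherited from uniqueness in the Hilbert-module case, since two morphisms of Kre\u\i n C*-correspondences agreeing after $\otimes$ agree on the dense subspace spanned by elementary tensors.

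The main obstacle I expect is keeping the bookkeeping between the three layers straight: the Kre\u\i n inner products on $\Ms,\Ns,\Cs$, their $J$-twisted Hilbert-module companions, and the automorphisms $\alpha,\beta,\gamma$ — in particular making sure that the balancing is over $\Bs$ in a way consistent with $\beta$, so that $J_\Ts$ descends to the quotient and the reconstructed left $\As$-action on $\Ts$ is by \emph{adjointable} operators for the Kre\u\i n inner product (not merely the twisted one). Once $J_\Ts$ is shown to be a legitimate fundamental symmetry intertwining the actions via $\alpha$ and $\gamma$, everything else reduces cleanly to the corresponding, already-known, statements for Hilbert C*-modules, so no delicate analytic estimate beyond the standard completion argument is needed.
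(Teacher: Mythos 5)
Your proposal is correct and follows essentially the same route as the paper: both build the $\Bs$-balanced tensor product, take $J_{\Ms}\otimes J_{\Ns}$ as the fundamental symmetry, use the inner product $\mip{x_1\otimes y_1}{x_2\otimes y_2}_\Cs=\mip{y_1}{\mip{x_1}{x_2}_\Bs\cdot y_2}_\Cs$ (up to the harmless choice between the two $J$-twisted auxiliary products, which the paper's remark notes are interchangeable via $\alpha$), and reduce all analytic issues to the known Hilbert C*-module case. The only difference is order of operations --- the paper defines the Kre\u\i n inner product first and then exhibits the even/odd Hilbert/anti-Hilbert decomposition, while you complete the twisted Hilbert-module tensor product first and twist back by $J_\Ts$ --- and your version is, if anything, more explicit about the completion and the well-definedness of $J_\Ts$ on the balanced quotient.
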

\begin{proof}
The unicity up to isomorphism is a standard consequence of a definition via universal factorization properties. For the proof of existence, consider the  fundamental decompositions $\Ms=\Ms_+\oplus \Ms_-$ and $\Ns=\Ns_+\oplus\Ns_-$ induced by a pair of fundamental symmetries $J_\Ms$ and $J_\Ns$ that are compatible with three fundamental symmetries $\alpha$ of $\As$, $\beta$ of $\Bs$ and $\gamma$ of $\Cs$. 

Using the algebraic tensor product of bimodules and the canonical isomorphism of bimodules 
\begin{multline}\label{eq: dec}
(\Ms_+\oplus \Ms_-)\otimes_\Bs(\Ns_+\oplus\Ns_-)\simeq 
\\
[(\Ms_+\otimes_\Bs\Ns_+)\oplus (\Ms_-\otimes_\Bs\Ns_-)]\oplus  
[(\Ms_+\otimes_\Bs\Ns_-)\oplus (\Ms_-\otimes_\Bs\Ns_+)], 
\end{multline}
we have that $J_\Ms\otimes_\Bs J_\Ns$ is a fundamental symmetry of $\Ms\otimes_\Bs\Ns$ that induces the previous decomposition and is compatible with the left action of $\As$ and the right action of $\Cs$. 

By universal factorization property (two times), we can define on $\Ms\otimes_\Bs\Ns$ a unique $\Cs$-valued inner product such that, 
for all $x_1,x_2\in \Ms$ and $y_1,y_2\in \Ns$,  
\begin{equation*}
\mip{x_1\otimes_\Bs y_1}{x_2\otimes_\Bs y_2}^{\Ms\otimes_\Bs\Ns}_\Cs:=\mip{y_1}{\mip{x_1}{x_2}^\Ms_\Bs\cdot  y_2}^{\Ns}_\Cs. 
\end{equation*}
The following property holds for this inner product 
\begin{equation*}
\gamma(\mip{x_1\otimes_\Bs y_1}{x_2\otimes_\Bs y_2}_\Cs)=
\mip{(J_\Ms\otimes_\Bs J_\Ns)(x_1\otimes_\Bs y_1)}{(J_\Ms\otimes_\Bs J_\Ns)(x_2\otimes_\Bs y_2)}_\Cs
\end{equation*}
and the algebra $\As$ acts by adjointable operators on the left. 

The inner product so defined on $\Ms\otimes_\Bs\Ns$ induces on each one of the direct summands of the even part of the decomposition in 
formula~\eqref{eq: dec} the structure of Hilbert C*-module and, for the summands of the odd part, the structure of anti-Hilbert C*-module over the same C*-algebra $(\Cs, \dag_\gamma)$.  
\end{proof}

\begin{theorem}
There is a weak category $\Mf_\bullet$ whose objects are unital Kre\u\i n \hbox{C*-algebras} $\As,\Bs,\dots$; whose arrows are right Kre\u\i n 
C*-cor\-re\-spon\-dences; and whose composition is obtained by internal tensor product of Kre\u\i n C*-cor\-re\-spon\-dences. 
In a totally similar way, we have a weak category ${}_\bullet\Mf$ of left Kre\u\i n C*-correspondences under internal tensor product. 
\end{theorem}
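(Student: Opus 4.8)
The plan is to establish that $\Mf_\bullet$ satisfies the axioms of a weak category (a bicategory whose $2$-cells are the morphisms of Kre\u\i n C*-correspondences of the previous definition): identity arrows exist, composites exist, and the associativity and left/right unit constraints hold up to coherent natural isomorphisms of Kre\u\i n C*-correspondences, subject to the pentagon and triangle identities. The existence of composites together with their uniqueness up to canonical isomorphism is exactly the previous theorem; moreover, since the composite is characterised by a universal factorisation property it does not depend, up to canonical isomorphism, on the auxiliary choices of compatible fundamental symmetries made in its construction, which is precisely the robustness a weak category requires. What remains is to exhibit the unit arrows, the structural isomorphisms, and to verify coherence.

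First I would take as unit arrow at a unital Kre\u\i n C*-algebra $\As$ the standard bimodule ${}_\As\As_\As$ of example~\ref{ex: alg}, regarded as a right Kre\u\i n C*-correspondence from $\As$ to $\As$ via the left regular representation $L:\As\to\Bs(\As_\As)$, $L_a(x):=ax$. One verifies $(L_a)^*=L_{a^*}$, so $L$ is a $*$-homomorphism; that $L$ intertwines the fundamental symmetry $\alpha$ of $\As$ with the fundamental symmetry $J_\As=\alpha$ of ${}_\As\As_\As$, i.e.\ $L_{\alpha(a)}=\alpha\circ L_a\circ\alpha=\alpha_{J_\As}(L_a)$; and that the compatibility $\alpha(a)\cdot x=J_\As\bigl(a\cdot J_\As(x)\bigr)$ from the definition of right correspondence holds, both sides being $\alpha(a)x$.

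Next, for the unit and associativity constraints I would use the evident bimodule isomorphisms $\lambda_\Ms:{}_\As\As_\As\otimes_\As\Ms\to\Ms$, $a\otimes x\mapsto a\cdot x$; $\rho_\Ms:\Ms\otimes_\Bs{}_\Bs\Bs_\Bs\to\Ms$, $x\otimes b\mapsto x\cdot b$; and the associator $\mathsf a_{\Ms,\Ns,\Ps}:(\Ms\otimes_\Bs\Ns)\otimes_\Cs\Ps\to\Ms\otimes_\Bs(\Ns\otimes_\Cs\Ps)$, $(x\otimes y)\otimes z\mapsto x\otimes(y\otimes z)$, defined on the algebraic tensor products and extended by continuity as in the construction of the internal tensor product. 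For each of these I would check only the two non-formal points: that it intertwines the fundamental symmetries — e.g.\ $\rho_\Ms$ does since $J_{\Ms\otimes_\Bs\Bs}(x\otimes b)=J_\Ms(x)\otimes\beta(b)\mapsto J_\Ms(x)\cdot\beta(b)=J_\Ms(x\cdot b)$ by the right Kre\u\i n C*-module identity $J_\Ms(x b)=J_\Ms(x)\beta(b)$ — and that it preserves the inner product fixed in the proof of the previous theorem, e.g.\ $\mip{x_1\otimes_\Bs b_1}{x_2\otimes_\Bs b_2}_\Bs=\mip{b_1}{\mip{x_1}{x_2}_\Bs\, b_2}_\Bs=b_1^*\mip{x_1}{x_2}_\Bs b_2=\mip{x_1 b_1}{x_2 b_2}_\Bs$, and analogously for $\lambda_\Ms$. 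For $\mathsf a$ the quickest route is to note that source and target enjoy the same universal factorisation property for left-$\As$-linear, right-$\Cs$-linear, $\Bs$- and $\Cs$-balanced trilinear maps, which produces $\mathsf a$, its inverse and its naturality at once; the naturality of $\lambda$ and $\rho$, and the functoriality of $\otimes$ on morphisms of correspondences (needed to make $\otimes$ a genuine composition bifunctor), follow likewise from the universal property.

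Finally, the triangle and pentagon coherence diagrams commute: after forgetting inner products and fundamental symmetries, $\lambda$, $\rho$ and $\mathsf a$ are precisely the structural isomorphisms of the classical bicategory of unital rings and bimodules, for which coherence is standard; and since on each iterated tensor product both the inner product and the fundamental symmetry are uniquely determined by the universal factorisation property, the structural $2$-cells of $\Mf_\bullet$ are uniquely determined, so the classical commutativities persist verbatim. The left-module version ${}_\bullet\Mf$ is obtained by the mirror-image argument. The main obstacle is not conceptual but one of discipline: each structural isomorphism must be confirmed to be a morphism of Kre\u\i n C*-correspondences in the strict sense of the definition — a bimodule map intertwining the chosen fundamental symmetries and preserving the relevant inner product — rather than a mere isomorphism of the underlying bimodules, and it is the universal property of the internal tensor product that reduces all these checks to the routine computations indicated above.
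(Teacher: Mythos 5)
Your proposal is correct and follows essentially the same route as the paper's (very terse) proof: associativity is handled via the universal factorization property of the internal tensor product, and the weak identities are the Kre\u\i n C*-algebras $\As$ regarded as right Kre\u\i n C*-correspondences over themselves with the standard inner product $\mip{a_1}{a_2}_\As=a_1^*a_2$. You simply supply in detail the unit/associativity constraints, the intertwining and inner-product checks, and the coherence argument that the paper leaves implicit.
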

\begin{proof}
The associativity of composition modulo isomorphism is assured using the universal factorization property. The (weak) identities are given the Kre\u\i n \hbox{C*-algebras} $\As$ considered as right Kre\u\i n C*-correspondences over themselves when equipped with their standard right inner product 
$\mip{a_1}{a_2}_\As:=a_1^*a_2, \quad  a_1,a_2\in \As$.
\end{proof}

\begin{remark} \label{rem: morita}
The previous categories $\Mf_\bullet$ and ${}_\bullet\Mf$ are actually 2-categories considering as 2-arrows the morphisms of Kre\u\i n C*-correspondences with their usual functional composition as composition over 1-arrows and their internal tensor product as composition over objects. 

This pair of weak 2-categories is the ``Kre\u\i n counterpart'' to the usual \hbox{2-cat}e\-gories of right and left C*-correspondences and their (common) subcategory of \hbox{1-isomorphisms}, that consists of Kre\u\i n \hbox{C*-bimodules} ${}_\As\Ms_\Bs$ that are full and satisfy the imprimitivity condition 
${}_\As\mip{x}{y}z=x\mip{y}{z}_\Bs$, for all $x,y,z\in \Ms$, is the ``Kre\u\i n counterpart'' of the Morita-Rieffel weak category of imprimitivity 
\hbox{C*-bimodules} that describe the (strong) Morita equivalence between C*-algebras\footnote{For additional details on the Morita-Rieffel categories and strong Morita equivalence see for example the review sections in~\cite{BCL} and the references therein.} and hence we have a theoretical background capable of discussing the notion of ``\emph{Kre\u\i n-Morita equivalence}'' at least in the context of Kawamura's Kre\u\i n 
\hbox{C*-algebras}. 
\xqed{\lrcorner}  
\end{remark}

\begin{remark} 
The previous categories (exactly as their C*-counterparts) are not equipped with involutions: the contragredient of a right correspondence is a left correspondence, but usually not another right correspondence.\footnote{Recall that, given a bimodule ${}_\As\Ks_\Bs$, over involutive algebras $\As,\Bs$, its contragredient bimodule ${}_\Bs\cj{\Ks}_\As$ is the same Abelian group $\cj{\Ks}:=\Ks$ with left/right actions defined via $b\cdot \cj{x}\cdot a:=\cj{a^*xb^*}$, for all $a\in \As$, $b\in \Bs$, $x\in \Ks$. For a right Kre\u\i n C*-correspondence $\Ks_\Bs$ with right inner product $\mip{x}{y}_\Bs$, for $x,y\in \Ks$, the contragredient ${}_\Bs\cj{\Ks}$ is naturally a left Kre\u\i n C*-correspondence via 
${}_\Bs\mip{\cj{x}}{\cj{y}}:=\mip{x}{y}_\Bs$, for all $\cj{x},\cj{y}\in \cj{\Ks}$, A similar statement holds for left correspondences.}
More interesting notions of ``bivariant'' Kre\u\i n C*-bimodules will be developed elsewhere. 
\xqed{\lrcorner} 
\end{remark}

\begin{example} \label{ex: spin}
Whenever the time-orientable space-orientable (compact) semi-Riemannian even-dimensional manifold $M$ admits a spinorial structure, or more generally a spin$^c$ structure, (see details in H.Baum~\cite{Ba}) the family $\Gamma(S(M))$ of continuous section of a given complex spinor bundle 
$S(M)$ becomes a Kre\u\i n-Morita equivalence Kre\u\i n C*-bimodule between $C(M)$ (on the right) and the Kre\u\i n \hbox{C*-algebra} 
$\Gamma(\CCl(M))$ on the left.\footnote{A similar statement holds in the odd-dimensional case if the Clifford algebra $\Gamma(\CCl(M))$ is replaced by its even part $\Gamma(\CCl^+(M))$, see~\cite[section~9.2]{FGV}.}
Its contragredient Kre\u\i n \hbox{C*-bimodule} $\Gamma(S(M))^*$ is isomorphic to the Kre\u\i n C*-bimodule of sections of the dual spinor bundle $S(M)^*$ and we have $\Gamma(S(M))\otimes_{C(M)}\Gamma(S(M))^*\simeq\Gamma(\Lambda^\CC(M))$ as tensor product of Kre\u\i n C*-bimodules. 
\xqed{\lrcorner} 
\end{example}

\section{Outlook}

The discussions of duality and of spectral theory, via suitable ``Kre\u\i n bundles'', for some ``commutative'' subclasses of the Kre\u\i n C*-modules here defined, will be dealt with in future works.\footnote{See anyway~\cite{BBL} for some elementary results in the case of commutative Kre\u\i n 
C*-algebras.} 

The notion of Kre\u\i n C*-module over a Kre\u\i n C*-algebra that we presented here, although interesting as a first step to explore some of the issues in semi-definite situations, is still too elementary to be fully useful for general applications to non-commutative spectral geometry, at least whenever the semi-Riemannian geometry involved presents topological obstructions to orientability, either in spacelike or in timelike sense (or both). Since global fundamental symmetries in Kre\u\i n \hbox{C*-algebras} 
are remnants of the fundamental decompositions of the Kre\u\i n spaces on which they are faithfully represented, their existence in situations coming from semi-Riemannian geometry seems to be a consequence of such global topological conditions of orientability and it is likely that a more general definition of a complete semi-definite analog of C*-algebras might be necessary to deal with such cases.  
A possible line of attack would be to define semi-definite modules that are direct summand submodules of our ``free-splitting'' Kre\u\i n modules over a C*-algebra (eliminating the topological obstruction on orientability via ``embedding'' into a wider environment exactly as we usually do in the case of projective but non-free modules) and redefine Kre\u\i n C*-algebras as compressions of the ``free-splitting'' Kawamura case. 
We might explore these and other possibilities in subsequent work. 

A more immediately achievable important goal (especially in view of applications to examples of semi-Riemannian geometries related to relativistic physics) is the removal of the unitality (compactness) requirements in the definitions of Kre\u\i n C*-modules and Kre\u\i n C*-algebras.   

Our main long-term interest is to formulate notions of semi-definite involutive operator algebraic environments that are suitable, as a (topological) background, for the development of non-commutative geometry and spectral triples in a completely general semi-definite situation. 


\medskip

\emph{Notes and acknowledgments} 
The paper originates from and further elaborates on material presented in S.~Kaewunpai master thesis~\cite{Ka} as well as on R.~Tanadkithirun and A.Atchariyabodee undergraduate senior projects~\cite{T,A}. Thanks to Starbucks Coffee at the 1$^\text{st}$ floor of the Emporium Suites Tower for the welcoming environment where this research work has been discussed and written. 
The authors thank the two anonymous referees of the paper. 

{\footnotesize

}

\end{document}